\newtheorem{thm}{Theorem}[section]
\newtheorem{assump}{Assumption}[section]
\newtheorem{rmk}{Remark}[section]
\newtheorem{lem}{Lemma}[section]
\newtheorem{cor}{Corollary}[section]
\newtheorem{algo}{Algorithm}[section]
\newcommand{\CEsp}[3]{ \textbf{E}^{#1}_{#3} \left[  #2  \right] }
\newcommand{\norm}[1]{\left| #1 \right|}
\newcommand{\Done}[2]{  {\frac{ \partial  #1}{\partial #2}}  }
\newcommand{\Dtone}[2] { \frac{ \partial^2 #1}{\partial #2^2} }
\newcommand{\revised}[1]{\textcolor{black}{#1}}
\title{A CONVOLUTION METHOD FOR NUMERICAL SOLUTION OF BACKWARD STOCHASTIC DIFFERENTIAL EQUATIONS}
\author{Cody B. Hyndman\footnotemark[2] \footnotemark[3] 
\and Polynice Oyono~Ngou\footnotemark[4] \footnotemark[5]}
\begin{document}
\maketitle

\renewcommand{\thefootnote}{\fnsymbol{footnote}}

\footnotetext[2]{Corresponding author. Concordia University, Department of Mathematics and Statistics, 1455 boulevard de Maisonneuve Ouest, Montr\'eal, Qu\'ebec, Canada H3G 1M8 (cody.hyndman@concordia.ca)}
\footnotetext[3]{This research was supported by the Natural Sciences and Engineering Research Council of Canada (NSERC).}
\footnotetext[4]{Concordia University, Department of Mathematics and Statistics, 1455 boulevard de Maisonneuve Ouest, Montr\'eal, Qu\'ebec, Canada H3G 1M8}
\footnotetext[5]{An earlier version of paper was presented at the Oxford-Man Institute of Quantitative Finance's   \textit{Young Researchers Meeting on BSDEs, Numerics and Finance} in July 2012.  The helpful comments of the meeting participants are gratefully acknowledged.}

\renewcommand{\thefootnote}{\arabic{footnote}}

\pagestyle{myheadings}
\thispagestyle{plain}
\markboth{C.~B. HYNDMAN AND P. OYONO~NGOU \quad \quad \quad Convolution method for BSDEs}{C.~B. HYNDMAN AND P. OYONO~NGOU \quad \quad \quad Convolution method for BSDEs}

\begin{abstract} 
We propose a new method for the numerical solution of backward stochastic differential equations (BSDEs) which finds its roots in Fourier analysis. The method consists of an Euler time discretization of the BSDE with certain conditional expectations expressed in terms of Fourier transforms and computed using the fast Fourier transform (FFT).  The problem of error control is addressed and a local error analysis is provided.   We consider the extension of the method to forward-backward stochastic differential equations (FBSDEs) and reflected FBSDEs.  Numerical examples are considered from finance demonstrating the performance of the method.
\end{abstract}

\noindent
\textbf{Key words:} backward stochastic differential equations (BSDEs), reflected BSDEs, fast Fourier transform,  parabolic PDE, numerical approximation, option valuation. \\

\noindent
\textbf{AMS 200 subject classifications:} Primary 60H10, 65C30; secondary 60H30.

\section{Introduction}  Backward stochastic differential equations (BSDEs) have been a topic of interest since the early work of \cite{bismut:1973} and the results of \cite{pardouxpeng:1990} on their well-posedness. A BSDE is an equation of the form 
\begin{equation} Y_t = \xi + \int_t^T f(s,Y_s,Z_s)ds - \int_t^T Z_s^{*} dW_s \label{eq:bsde} \end{equation} defined on a complete filtered probability space $\left( \Omega , {\bf P}, \mathcal{F}, \{ \mathcal{F}_t\}_{t \in [0,T]} \right)$ where $W$ is a standard $n$-dimensional Brownian motion, the terminal condition $\xi \in \mathbb{R}^k$ is a square integrable $\mathcal{F}_T$-measurable random variable and the driver $f:[0,T]\times\mathbb{R}^k \times \mathbb{R}^{k \times n} \rightarrow \mathbb{R}^k$ is a functional. It is known from \cite{pardouxpeng:1990} that there exists a unique adapted square integrable backward process $Y$ taking values in $\mathbb{R}^k$ and a unique predictable process $Z$ with values in $\mathbb{R}^{n \times k}$ satisfying equation (\ref{eq:bsde}) under Lipschitz and integrability conditions on the driver $f$. 

Many works have extended this existence and uniqueness result. \cite{antonelli:1993} introduced forward-backward stochastic differential equations (FBSDEs). \cite{mao:1995}, \cite{lepeltiersm:1997} and \cite{kobylanski:2000} among others treat non-Lipschitz cases. Also, the theory of BSDEs has found various applications, particularly in finance and in the study of partial differential equations (PDEs). 
From \cite{MR1176785} (see also \cite[Section 4.1]{elkaouri:1996}) we have
that if the Cauchy problem to the one-dimensional diffusion PDE 
\begin{equation} \begin{cases} \Done{u}{t} + \frac{1}{2}  \Dtone{u}{x} + f(t,u,\Done{ u}{x} ) = 0 \text{, } (t,x) \in [0,T) \times \mathbb{R}\\
u(T,x) = g(x) \end{cases} \label{eq:pdebsde} 
\end{equation}
has a unique solution $u \in \mathcal{C}^{1,2}$ then the solution $(Y,Z)$ for the \revised{one-dimensional} BSDE (\ref{eq:bsde}) with terminal condition $\xi = g(W_{T})$ admits the representation 
\begin{eqnarray}  Y_t & = & u(t,W_t) \label{eq:soly}\\
Z_t & = &  \Done{u}{x}(t, W_t). \label{eq:solz}\end{eqnarray}
Conversely, the solution of the PDE~(\ref{eq:pdebsde}) can be interpreted in terms of the solution of the BSDE~(\ref{eq:bsde}).
General formulations of the nonlinear Feynman-Kac formula for FBSDEs, quasilinear parabolic PDEs, and viscosity solutions have been studied extensively.

Deriving an explicit solution to a nontrivial (F)BSDE is possible only in very few situations, such as \cite{yong:1999}, \cite{hyndman:2009} and \cite{richter:2012}. Thus, numerical methods for BSDEs have been studied extensively.  Numerical methods for (F)BSDEs can be classified into three main groups: PDE based methods, spatial discretization based methods, and Monte-Carlo based methods.  PDE based methods, which started with the finite difference approach of \cite{douglas:1996}, consider a numerical resolution of the nonlinear parabolic PDE related to the (F)BSDE. The two other methods rely on a time discretization of the (F)BSDE.  Spatial discretization based methods (see \cite{chevance:1997}, \cite{ballypages:2003}, \cite{delaruem:2006}, \cite{crisanm:2012}, \cite{RuijOosl:2013} or \cite{pengxu:2011} among others) use a deterministic space grid. On the other hand, the space discretization is random in Monte-Carlo based methods (for instance, \cite{bouchardtouzi:2004},  \cite{gobet:2005}, and \cite{benderdenk:2007}).

In this paper, we propose an alternative spatial discretization method for BSDEs and illustrate its implementation in the one-dimensional case. To the best of our knowledge, the most efficient approach\revised{, in terms of speed and accuracy,} in this simple case is the binomial method of \cite{pengxu:2011} which has connections with the theoretical work of \cite{briand:2001} and \cite{maprottermt:2002}. However, our method avoids a notable drawback of the binomial method: the contraction of the space grid leading to the approximation of the Wiener process by means of scaled random walks. Indeed, we use a fixed equidistant space grid, thus allowing an exact simulation of the Wiener process at time nodes. The FFT algorithm, which plays a key role in our method, helps in producing an efficient algorithm. As in \cite{carrmadan:1999} and \cite{lordal:2008} in the context of option pricing under L\'evy processes, we employ the FFT algorithm to compute quadratures. The presence of dynamic programming through the Euler scheme is a major similarity between our method and \cite{lordal:2008}. The method presented in \cite{RuijOosl:2013} is somewhat similar in that transform methods are employed, but differs in the discretization scheme and the use of the Fourier-cosine expansion.

This paper is structured as follows. Section \ref{sec:timedisc} reviews Euler time discretization schemes for BSDEs which are used in Section \ref{sec:conv} to develop the convolution method. Section \ref{sec:error} presents a detailed error analysis of the convolution method. Some extensions of the method are presented in Section \ref{sec:extensions}, numerical results for examples from finance are included in Section \ref{sec:numres}, and Section \ref{sec:concl} concludes.

\section{Time discretization of BSDEs} \label{sec:timedisc}
The convolution method developed in this paper, as any spatial discretization based method, requires the availability of a time discretization scheme for BSDEs. In this section, we \revised{consider well-known} Euler time discretization schemes that are widely used in numerical methods for BSDEs.  Alternatives to the Euler schemes can be found in the $\theta-$schemes of \cite{Zhao2:2006} or the penalization scheme proposed by \cite{pengxu:2011} inspired by a method used by \cite{elkarouial:1997} to prove well-posedness of reflected BSDEs. Convergence of the Euler schemes are considered by \cite{zhang:2001, zhang:2004} and \cite{bouchardtouzi:2004}.

For simplicity of notation we shall suppose all processes are one-dimensional %
($k=n=1$).  Further, we make the following assumption to ensure existence and uniqueness of a solution to the BSDE~(\ref{eq:bsde}).
\begin{assump} We suppose a Markovian terminal condition with \begin{equation} \xi = g(W_T)\end{equation}  where $g : \mathbb{R} \rightarrow \mathbb{R}$ is real function satisfying the square integrability condition 
\begin{equation} \CEsp{}{ \xi^2 }{} = \CEsp{}{g(W_T)^2}{} < \infty. \end{equation}  
In addition, both the terminal condition $g$ and the driver $f$ verify the Lipschitz condition
\begin{align} 
\norm{g(x) - g(\bar{x})} + \norm{ f(t,y,z) - f(\bar{t},\bar{y},\bar{z})}  < C \left( \norm{x - \bar{x}} + \norm{y- \bar{y}} + \norm{z - \bar{z}} \right)   \label{eq:lips} 
\end{align} 
for some constant $C > 0$, $\forall x,\bar{x},y,\bar{y},z,\bar{z} \in \mathbb{R}$, and $\forall t,\bar{t} \geq 0 $.
\label{assump:bsde}
\end{assump}

Consider the time mesh  $\pi := \{ 0=t_0 < t_1 < ... <t_n=T \}$ on the time interval $[0,T]$ with $n \in \mathbb{N}$ time steps. \revised{Define $\Delta_i := t_{i+1} - t_i$ and $\Delta W_i = W_{t_{i+1}} - W_{t_i}$  for $i=1,\ldots,n$.  Write  $\norm{\pi} := \max_{0 \leq i < n} \norm{t_{i+1} - t_i}$ for the maximal time step. } Let ${Y}^{\pi}_{t_i}$ and ${Z}^{\pi}_{t_i}$ denote the approximate solution at time node $t_i$ \revised{and $\xi^{\pi}$  be an approximation of the terminal condition $\xi$}.  \revised{For} BSDEs with a Markovian terminal condition we set $\xi^{\pi} = \xi = g(W_T)$.

\revised{A well known method  (see, for example, \cite{zhang:2001,zhang:2004}, \cite{bouchardtouzi:2004}, or \cite{bouchardet:2009},  for a standard derivation)  for obtaining an approximate solution based on the Euler time-discretization of equation~(\ref{eq:bsde}) is given by the backward recursion}
\begin{equation}\begin{cases} {Z}^{\pi}_{t_n} = 0 \text{, } {Y}^{\pi}_{t_n} = \xi^{\pi} \\
{Z}^{\pi}_{t_i} = \frac{1}{\Delta_i} \CEsp{}{ {Y}^{\pi}_{t_{i+1}} \Delta W_i | \mathcal{F}_{t_i} }{} \text{, } 0 \leq i < n\\  {Y}^{\pi}_{t_i}=\CEsp{}{{Y}^{\pi}_{t_{i+1}}+f({t_i},{Y}^{\pi}_{t_{i+1}},{Z}^{\pi}_{t_i})\Delta_i | \mathcal{F}_{t_i}}{} \text{, } 0 \leq i < n \end{cases} \label{eq:expdisc1}\end{equation}
which we call the explicit Euler scheme I.  
Another explicit scheme consists of replacing the conditional expectation of the driver in the explicit Euler scheme I by the driver evaluated at the conditional expectations of the arguments.  This procedure leads to 
\begin{equation}
\begin{cases} {Z}^{\pi}_{t_n} = 0 \text{, } {Y}^{\pi}_{t_n} = \xi^{\pi} \\
{Z}^{\pi}_{t_i} = \frac{1}{\Delta_i} \CEsp{}{ {Y}^{\pi}_{t_{i+1}} \Delta W_i | \mathcal{F}_{t_i} }{} \text{, } 0 \leq i < n \\  {Y}^{\pi}_{t_i}=\CEsp{}{{Y}^{\pi}_{t_{i+1}}| \mathcal{F}_{t_i}}{}  + f({t_i},\CEsp{}{{Y}^{\pi}_{t_{i+1}}| \mathcal{F}_{t_i}}{},{Z}^{\pi}_{t_i})\Delta_i \text{, } 0 \leq i < n \end{cases} \label{eq:expdisc2}
\end{equation} 
which we call the explicit Euler scheme II.
The approximate $(Y,Z)$ processes then takes the form 
\begin{equation} 
{Y}^{\pi}_t = {Y}^{\pi}_{t_i} \text{, } {Z}^{\pi}_t = {Z}^{\pi}_{t_i} \text{ for } t \in [t_i, t_{i+1}).
\end{equation}
on the entire time interval.

The global discretization error $E_{\pi}$  is defined as 
\begin{equation} 
E_{\pi}^2 := \max_{0 \leq i <n-1} \CEsp{}{ \sup_{t \in [t_{i},t_{i+1}]} \norm{Y_t - {Y}^{\pi}_{t_i} }^2  }{} + \sum_{i=0}^{n-1} \CEsp{}{  \int_{t_i}^{t_{i+1}} \norm{Z_s - Z^{\pi}_{t_i}}^2 ds}{} 
\end{equation}
for any version of the Euler scheme.  Due to the Lipschitz nature of the driver $f$, it can be proved that the explicit schemes have a first order quadratic error as noted by \cite[Remark 2.1.1]{bouchardet:2009}.

\begin{thm} Under the setting of Assumption \ref{assump:bsde}, the Euler schemes yield a first order quadratic error
\begin{equation} 
E_{\pi}^2 =  \mathcal{O}(\norm{\pi}).
\end{equation}
\end{thm}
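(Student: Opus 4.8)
The plan is to follow the two-step strategy of \citet{zhang:2004} and \citet{bouchardtouzi:2004}: first establish the $L^2$ path regularity of the control process $Z$, and then run a backward stability estimate for the implicit scheme \eqref{eq:impdisc} via the discrete Gronwall lemma. Because we take $\xi^\pi = \xi = g(W_T)$ there is no terminal error to propagate, which simplifies the accounting. Throughout, $C$ denotes a constant depending only on $T$ and the Lipschitz constant in \eqref{eq:lips} and changing from line to line; for each $i$ I set $\bar Z_{t_i} := \frac{1}{\Delta_i}\CEsp{}{\int_{t_i}^{t_{i+1}} Z_s\,ds}{\mathcal{F}_{t_i}}$, the $L^2$-optimal $\mathcal{F}_{t_i}$-measurable approximation of $s\mapsto Z_s$ on $[t_i,t_{i+1}]$, and $\delta_{i+1} := Y_{t_{i+1}} - {Y}^{\pi}_{t_{i+1}}$.

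The first ingredient is the path regularity estimate
\begin{equation*}
\sum_{i=0}^{n-1}\CEsp{}{\int_{t_i}^{t_{i+1}} \norm{Z_s - \bar Z_{t_i}}^2\,ds}{} = \mathcal{O}(\norm{\pi}),
\end{equation*}
together with the a priori bounds $\sup_{t\le T}\normf{Z_t}{\infty}<\infty$, $\sup_{t\le T}\CEsp{}{\norm{f(t,Y_t,Z_t)}^2}{}<\infty$ and the $L^2$ modulus $\CEsp{}{\norm{Y_s-Y_{t_i}}^2}{}\le C\norm{s-t_i}$, all of which hold here because the terminal condition is $g(W_T)$ with $g$ Lipschitz, so that $Y$ and $Z$ inherit Lipschitz, respectively bounded, Markovian representations in the spatial variable. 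I would quote this regularity estimate from \citet{zhang:2004}; deriving it afresh would use the representation of $Z$ through the first variation process together with the martingale property of $s\mapsto\CEsp{}{Z_t}{\mathcal{F}_s}$, and this is the only place where the Markovian Lipschitz structure of Assumption~\ref{assump:bsde} is genuinely needed.

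For the stability step, subtract the defining identities of \eqref{eq:impdisc} from the exact BSDE \eqref{eq:bsde} written on $[t_i,t_{i+1}]$. Taking $\CEsp{}{\cdot}{\mathcal{F}_{t_i}}$ in \eqref{eq:timedisc} versus the exact relation yields
\begin{equation*}
Y_{t_i} - {Y}^{\pi}_{t_i} = \CEsp{}{\delta_{i+1}}{\mathcal{F}_{t_i}} + \CEsp{}{\int_{t_i}^{t_{i+1}}\! f(s,Y_s,Z_s)\,ds}{\mathcal{F}_{t_i}} - f(t_i,{Y}^{\pi}_{t_i},{Z}^{\pi}_{t_i})\Delta_i ,
\end{equation*}
while multiplying \eqref{eq:timedisc} by $\Delta W_i$ and using the It\^o isometry gives, with $\normf{A_i}{}^2\le C\Delta_i^3$,
\begin{equation*}
\Delta_i\bigl(\bar Z_{t_i} - {Z}^{\pi}_{t_i}\bigr) = \CEsp{}{\bigl(\delta_{i+1} - \CEsp{}{\delta_{i+1}}{\mathcal{F}_{t_i}}\bigr)\Delta W_i}{\mathcal{F}_{t_i}} + A_i .
\end{equation*}
In the first identity I would pass through $f(t_i,Y_{t_i},\bar Z_{t_i})$, controlling $\int_{t_i}^{t_{i+1}}[f(s,Y_s,Z_s)-f(t_i,Y_{t_i},\bar Z_{t_i})]\,ds$ by \eqref{eq:lips} together with the path-regularity quantities of the previous paragraph, and bounding $f(t_i,Y_{t_i},\bar Z_{t_i})-f(t_i,{Y}^{\pi}_{t_i},{Z}^{\pi}_{t_i})$ by $C(\norm{Y_{t_i}-{Y}^{\pi}_{t_i}}+\norm{\bar Z_{t_i}-{Z}^{\pi}_{t_i}})$. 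Squaring, taking expectations, using that conditional expectation is an $L^2$-contraction and that $\CEsp{}{(\Delta W_i)^2}{\mathcal{F}_{t_i}}=\Delta_i$ (so conditional Cauchy--Schwarz turns the second identity into a bound by the expected conditional variance $\normf{\delta_{i+1}}{}^2-\normf{\CEsp{}{\delta_{i+1}}{\mathcal{F}_{t_i}}}{}^2$, which is precisely what makes the $\Delta_i^{-1}$ factor harmless), and applying Young's inequality with a parameter chosen large enough to keep the coefficient of that conditional-variance term below $1$, one arrives at a closed recursion
\begin{equation*}
a_i \le (1+C\Delta_i)\,a_{i+1} + e_i , \qquad a_n = 0 , \qquad \textstyle\sum_i e_i = \mathcal{O}(\norm{\pi}),
\end{equation*}
for $a_i:=\normf{Y_{t_i}-{Y}^{\pi}_{t_i}}{}^2$. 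The discrete Gronwall lemma then gives $\max_i a_i=\mathcal{O}(\norm{\pi})$; feeding this back into the rearranged recursion bounds $\sum_i\bigl(\normf{\delta_{i+1}}{}^2-\normf{\CEsp{}{\delta_{i+1}}{\mathcal{F}_{t_i}}}{}^2\bigr)$, hence $\sum_i\Delta_i\normf{\bar Z_{t_i}-{Z}^{\pi}_{t_i}}{}^2$, by $\mathcal{O}(\norm{\pi})$; and combining with the regularity estimate through $\int_{t_i}^{t_{i+1}}\norm{Z_s-{Z}^{\pi}_{t_i}}^2\,ds\le 2\int_{t_i}^{t_{i+1}}\norm{Z_s-\bar Z_{t_i}}^2\,ds+2\Delta_i\norm{\bar Z_{t_i}-{Z}^{\pi}_{t_i}}^2$ controls the $Z$-part of $E_\pi^2$. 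For the $Y$-part, $\CEsp{}{\sup_{t\in[t_i,t_{i+1}]}\norm{Y_t-{Y}^{\pi}_{t_i}}^2}{}\le 2\CEsp{}{\sup_{t\in[t_i,t_{i+1}]}\norm{Y_t-Y_{t_i}}^2}{}+2a_i$, and the first term is $\mathcal{O}(\norm{\pi})$ by the Burkholder--Davis--Gundy inequality using $\sup_t\normf{Z_t}{\infty}<\infty$ and the linear growth of $f$.

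I expect the path regularity of $Z$ to be the only real obstacle: the naive pointwise bound $\CEsp{}{\norm{Z_t-Z_s}^2}{}=\mathcal{O}(\norm{t-s})$ is not available in general, so one must exploit the averaged conditional structure of $\bar Z_{t_i}$ and the Markovian representation of $Z$. Everything downstream is routine bookkeeping with the It\^o isometry, \eqref{eq:lips}, Young's inequality and discrete Gronwall, the one structural point being that the expected conditional variance showing up in the $Z$-estimate telescopes against the $Y$-recursion.
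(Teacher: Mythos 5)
The paper does not prove this theorem at all: it is quoted as a known result of \citet{zhang:2004} and \citet{bouchardtouzi:2004}, and your outline --- the $L^{2}$ path-regularity estimate for $Z$ via $\bar Z_{t_i}$, followed by the backward stability recursion in which the conditional-variance term from the $Z$-estimate telescopes against the $Y$-recursion before discrete Gronwall --- is precisely the argument of those references. So your proposal is correct and coincides with the proof the paper is implicitly relying on.
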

\revised{In the next section we formulate the convolution method.}

\section{Convolution method} \label{sec:conv}
In this section, we introduce the convolution method for the numerical solution of the BSDE~(\ref{eq:bsde}). The method involves expressing the conditional expectations in an explicit Euler time discretization of the BSDE as convolutions, calculating the Fourier transform of the approximate solution, applying the convolution theorem of Fourier analysis, and taking the inverse Fourier transform of the results in order to recover expressions for the approximate solution which are recursive backward in time.  In order to implement the convolution method we present the discretization of intermediate quadratures and their relationship to the discrete Fourier transform (DFT) which can be efficiently computed using the fast Fourier transform (FFT).

\subsection{Convolution on the explicit Euler scheme II} 

The starting point of the convolution method for BSDEs is an explicit Euler scheme. If we consider the explicit Euler scheme II of equation (\ref{eq:expdisc2}) an approximate solution of the BSDE (\ref{eq:bsde}) at mesh time $t_{i}$ consists of real-valued functions $u_{i}$, $\dot{u}_{i}$, and $\tilde{u}_{i}$ defined by the backward recursions
\begin{align} 
  u_i(x)  & =   \tilde{u}_i(x) + \Delta_i f(t_i, \tilde{u}_i(x) , \dot{u}_i(x)) \label{eq:uf} \\
  \dot{u}_i (x) & =  \frac{1}{\Delta_i} \CEsp{}{ u_{i+1}(W_{t_{i+1}}) \Delta W_i | W_{t_i} = x }{} \nonumber \\
  & =  \frac{1}{\Delta_i} \int_{-\infty}^{\infty} (y - x)u_{i+1}(y) h(y - x) dy \label{eq:guf} \\
  \tilde{u}_i(x) & =  \CEsp{}{ u_{i+1}(W_{t_{i+1}}) | W_{t_i} = x }{} \nonumber \\
  & =   \int_{-\infty}^{\infty} u_{i+1}(y) h(y - x) dy \label{eq:iuf} 
\end{align}
for $i=0,1,...,n-1$ and $u_n(x) = g(x)$. 
Note that $u_{i}$ represents the approximate $Y$ process and $\dot{u}_{i}$ stands for the approximate $Z$ process at mesh time $t_{i}$ while $\tilde{u}_{i}$ is an intermediate quantity.  The notation $u$ used in equations~(\ref{eq:uf})-(\ref{eq:iuf}) is not to be confused with the solution of the PDE~(\ref{eq:pdebsde}) as we do not employ the representation~(\ref{eq:soly})-(\ref{eq:solz}) in this paper.  The function $h$ is the density function of $W_{t_{i+1}}$ conditional on the value of $W_{t_{i}}$ 
\begin{equation} 
h(x) = (2 \pi \Delta_i)^{-\frac{1}{2}} \exp \left( - \frac{ x^2}{ 2 \Delta_i}  \right).
\end{equation} 

If a method for calculating the integrals of equations (\ref{eq:guf}) and (\ref{eq:iuf}) is available, then the sequence $\left( u_i(W_{t_i}) , \dot{u}_{i}(W_{t_i})\right)$ for $i=0,1,2,...,n-1$ is an approximation to the BSDE solution of equations (\ref{eq:soly}) and (\ref{eq:solz}) on the interval $[0,T]$.  The stationarity and independence of Brownian increments allow us, as in \cite{lordal:2008}, to express the functions $\tilde{u}_{i}$ and $\dot{u}_{i}$ in equations (\ref{eq:guf}) and (\ref{eq:iuf}) as convolutions. These convolutions suggest using Fourier transforms and hence the computation of the integrals via discrete Fourier transforms.

Recall the Fourier transform of an integrable real function $\eta$ is the function $\hat{\eta} : \mathbb{R} \rightarrow \mathbb{C}$ is defined as 
\begin{equation} \hat{\eta}( \nu ) := \mathfrak{F}[\eta](\nu) = \int_{-\infty}^{\infty} e^{-{\bf i} \nu x} \eta(x) dx \end{equation}
where ${\bf i} = \sqrt{-1}$ is the imaginary unit. The inverse Fourier transform recovers the function $\eta$ from its Fourier transform $\hat{\eta}$ through the relation
\begin{equation} \eta(x) := \mathfrak{F}^{-1}[ \hat{\eta} ](x) = \frac{1}{2 \pi} \int_{-\infty}^{\infty} e^{ {\bf i} x\nu } \hat{\eta}(\nu) d\nu. \end{equation} 
For any real function $\eta: \mathbb{R} \rightarrow \mathbb{R} $ define the dampened function $\eta^{\alpha}$ as
\begin{align} 
\eta^{\alpha}(x) & =  e^{-\alpha x} \eta(x). \label{eq:dampening}
\end{align} 
where $\alpha\in\mathbb{R}$ is a dampening parameter. Taking the Fourier transform of $\tilde{u}_{i}^{\alpha}$ in equation~(\ref{eq:iuf}) gives 
\begin{align} \mathfrak{F}[ \tilde{ u}^{\alpha}_i ](\nu) & =  \int_{-\infty}^{\infty} e^{ -{\bf i} \nu x} e^{-\alpha x} \int_{-\infty}^{\infty} u_{i+1}(y) h(y - x) dy dx  \nonumber \\
& = \int_{-\infty}^{\infty} e^{ -{\bf i} \nu x} \int_{-\infty}^{\infty} u^{\alpha}_{i+1}(y) e^{ \alpha(y-x) }h(y - x) dy dx \nonumber \\
& =  \mathfrak{F}[u^{\alpha}_{i+1}](\nu) \mathfrak{F}[e^{-\alpha z}h(-z)](\nu) \label{eq:fouriertu}
\end{align}
using the convolution theorem. 
Moreover, making the change of variable $x=-z$,
\begin{align} \mathfrak{F}[e^{-\alpha z}h(-z)](\nu) & =  \int_{-\infty}^{\infty} e^{ -{\bf i} \nu z} e^{ -\alpha z} h(-z) dz %
= \int_{-\infty}^{\infty} e^{ {\bf i} (\nu - {\bf i} \alpha) x} h(x) dx %
\nonumber \\ 
& =  \phi( \nu - {\bf i} \alpha) \label{eq:fourierdf}
\end{align}
where 
$   \phi(\nu) = \exp\left( - \frac{1}{2} \Delta_i \nu ^2 \right) $
is the characteristic function of the density $h$. 

The equality of equation (\ref{eq:fourierdf}) is well-defined since
$\norm{ \phi( \nu - {\bf i} \alpha) } < \infty$ for any $\alpha\in\mathbb{R}$. We introduce the dampening parameter $\alpha$ to ensure relative periodicity for the functions $u^{\alpha}_{i+1}$ as shown in the sequel. In practice, integrability is not necessary since a truncation is performed in the numerical implementation.
Combining equations (\ref{eq:fouriertu}) and (\ref{eq:fourierdf}) gives \begin{equation} \mathfrak{F}[ \tilde{ u}^{\alpha}_i ](\nu) = \mathfrak{F}[u^{\alpha}_{i+1}](\nu)  \phi( \nu - {\bf i} \alpha). \label{eq:ftua}\end{equation} 
\label{page:int}

Similarly, the Fourier transform of $\dot{u}_{i}^{\alpha}$ in equation~(\ref{eq:guf})
is given by
\begin{eqnarray} \mathfrak{F}[ \dot{ u}^{\alpha}_i ](\nu) & = & -\frac{1}{\Delta_i}\mathfrak{F}[u^{\alpha}_{i+1}](\nu) \mathfrak{F}[z e^{-\alpha z}h(-z)](\nu)  \nonumber \\
& = & -  \frac{{\bf i}}{\Delta_i} \mathfrak{F}[u^{\alpha}_{i+1}](\nu)  \Done{}{\nu} \mathfrak{F}[e^{-\alpha z}h(-z)](\nu) \nonumber \\
& = & -\frac{{\bf i}}{\Delta_i} \mathfrak{F}[u^{\alpha}_{i+1}](\nu)  \Done{}{\nu} \phi( \nu - {\bf i} \alpha) \nonumber \\
& = &  (\alpha + {\bf i } \nu) \mathfrak{F}[u^{\alpha}_{i+1}](\nu)  \phi( \nu - {\bf i} \alpha) \label{eq:fnua} \end{eqnarray}
using the differentiation properties of the Fourier transform.

From equations (\ref{eq:ftua}) and (\ref{eq:fnua}), we recover the
functions $\tilde{u}_{i}$ and $\dot{u}_{i}$ by taking the inverse
Fourier transform and adjusting for the dampening factor \begin{eqnarray} \tilde{u}_i (x) & = &  e^{\alpha x} \mathfrak{F}^{-1} \left[ \mathfrak{F}[u^{\alpha}_{i+1}](\nu)  \phi( \nu - {\bf i} \alpha) \right](x)  \label{eq:solu1}\\
\dot{ u}_i(x) & = & e^{\alpha x} \mathfrak{F}^{-1} \left[ (\alpha + {\bf i} \nu) \mathfrak{F}[u^{\alpha}_{i+1}](\nu) \phi( \nu - {\bf i} \alpha) \right](x). \label{eq:solu2}\end{eqnarray}
Equations (\ref{eq:uf}), (\ref{eq:solu1}), and (\ref{eq:solu2}), evaluated at $x=W_{t_{i}}$, define a convolution method for the approximate solution of the BSDE (\ref{eq:bsde}) based on the explicit Euler scheme II.

\subsection{Convolution on the explicit Euler scheme I}

An alternative characterization of the approximate solution of the BSDE~(\ref{eq:bsde}) is obtained if one considers the explicit
Euler scheme I of equation (\ref{eq:expdisc1}).  In this case, the approximate solution $(Y,Z)$
consists of functions $v_{i}$ and $\dot{v}_{i}$ at mesh time $t_{i}$
which take the form 
\begin{align} v_i(x) & =   \CEsp{}{ \tilde{v}_{i+1}(W_{t_{i+1}})  | W_{t_i} = x }{} \nonumber \\
& =  \int_{-\infty}^{\infty} \tilde{v}_{i+1}(y) h(y-x) dy \label{eq:3.18}
\end{align}
where 
\begin{align} 
\tilde{v}_{i+1}(x)  & =  v_{i+1}(x) + \Delta_i f(t_i, v_{i+1} (x), \dot{v}_i(x) ) \text{, } \label{eq:3.19} \\
\dot{v}_i(x) & =   \frac{1}{\Delta_i} \CEsp{}{ v_{i+1}(W_{t_{i+1}}) \Delta W_i | W_{t_i} = x }{} \nonumber \\
& =  \frac{1}{\Delta_i} \int_{-\infty}^{\infty}(y-x) v_{i+1}(y) h(y - x) dy, \label{eq:3.20}
\end{align}
for $i=0,1,...,n-1$ and $v_{n}(x)=g(x)$. 

Following the steps of the previous characterization equations (\ref{eq:3.18}) and (\ref{eq:3.20}) lead naturally to 
\begin{align} 
v_i (x) & =   e^{\alpha x} \mathfrak{F}^{-1} \left[ \mathfrak{F}[\tilde{v}^{\alpha}_{i+1}](\nu)  \phi( \nu - {\bf i} \alpha) \right](x)  \label{eq:solv1} \\
\dot{ v}_i(x) & =  e^{\alpha x} \mathfrak{F}^{-1} \left[ (\alpha + {\bf i} \nu) \mathfrak{F}[v^{\alpha}_{i+1}](\nu)  \phi( \nu - {\bf i} \alpha) \right](x). \label{eq:solv2} 
\end{align} 
where both $v_{i}^{\alpha}$ and $\tilde{v}_{i}^{\alpha}$ for $i=0,1,...,n-1$ along with the dampened terminal condition are
assumed to be integrable so that they admit Fourier transforms.  Equations (\ref{eq:3.19}), (\ref{eq:solv1}), and (\ref{eq:solv2}) define a convolution method for the approximate solution of the BSDE~(\ref{eq:bsde}) based on the explicit Euler scheme I.

\subsection{Numerical implementation} \label{sec:3.3}
From equations (\ref{eq:solu1}) and (\ref{eq:solu2}) or equations (\ref{eq:solv1}) and (\ref{eq:solv2})
one notices that computing the approximate solutions ($u_{i}$, $\dot{u}_{i}$) and
($v_{i}$, $\dot{v}_{i}$) at mesh time $t_{i}$ reduces to computing a function $\theta:\mathbb{R}\rightarrow\mathbb{R}$
depending on two functions $\psi:\mathbb{C}\rightarrow\mathbb{C}$
and $\eta:\mathbb{R}\rightarrow\mathbb{R}$ in the following manner
\begin{equation} \theta (x) = \frac{1}{2\pi} \int_{-\infty}^{\infty} e^{ {\bf i} \nu x } \widehat{\eta^{\alpha}}(\nu) \psi( \nu ) d\nu  \label{eq:Pint}\end{equation}if
we drop the dampening factor $e^{\alpha x}$.

This integral is numerically computed by discretizing the Fourier
space with a uniform grid of $N+1$ points $\{\nu_{i}\}_{i=0}^{N}$
on the interval $[-\frac{L}{2},\frac{L}{2}]$ of length $L$, where
$N$ is even, such that \begin{equation} \nu_i = \nu_0 + i \Delta \nu \end{equation}
with $\nu_{0}=-\frac{L}{2}$ and $\Delta\nu=\frac{L}{N}$. Hence,
for any $x\in\mathbb{R}$ 
\begin{eqnarray} \theta (x) & \approx & \frac{1}{2 \pi} \int_{ - \frac{L}{2} }^{ \frac{L}{2} } e^{ {\bf i} \nu x } \widehat{\eta^{\alpha}}(\nu) \psi( \nu ) d\nu \nonumber \\
& \approx & \frac{ \Delta \nu }{2 \pi } \sum_{i=0}^{N-1} e^{ {\bf i} \nu_i x } \widehat{ \eta^{\alpha} }( \nu_i) \psi(\nu_i) \label{eq:intctheta}
\end{eqnarray}where the integral is approximated using lower Riemann sums and 
\begin{equation} 
  \widehat{\eta^{\alpha}}(\nu_i) = \int_{-\infty}^{\infty} e^{- {\bf i} x \nu_i } \eta^{\alpha}(x) dx = \int_{-\infty}^{\infty} e^{- {\bf i} x \nu_i } e^{-\alpha x} \eta(x) dx . 
  \label{eq:feta}
\end{equation}This
last integral is also computed using an uniform grid of $N+1$ points 
$\{x_{j}\}_{j=0}^{N}$ on the restricted interval $[x_0,x_N]$ centred at $W_0=0$ such that \begin{equation} x_j = -\frac{l}{2} + j \Delta x \end{equation}
where $\Delta x = \frac{l}{N}$ is chosen so that the Nyquist relation
 $Ll = 2 \pi N$ is satisfied. 

The discretization of the integral in equation (\ref{eq:feta}) leads to an expression involving the discrete Fourier transform (DFT). The DFT is a numerical procedure that transforms a set of real or complex numbers $\{x_{j}\}_{j=0}^{N-1}$ into another set
$\{\hat{x}_{j}\}_{j=0}^{N-1}$ through the relation\begin{equation} \hat{x}_k := \mathfrak{D}[ x ]_k = \frac{1}{N} \sum_{j=0}^{N-1} e^{-{\bf i} j k \frac{2\pi}{N} } x_j \end{equation}for
$k=0,1,...,N-1$. The inverse DFT performs a reciprocal operation
by computing the set of numbers $\{{x}_{j}\}_{j=0}^{N-1}$ using
the numbers $\{\hat{x}_{j}\}_{j=0}^{N-1}$ as \begin{equation} x_k := \mathfrak{D}^{-1}[ \hat{x} ]_k = \sum_{j=0}^{N-1} e^{{\bf i} j k \frac{2\pi}{N} } \hat{x}_j \end{equation}for
$k=0,1,...,N-1$.

As we intend to use the DFT to compute (\ref{eq:intctheta}) and (\ref{eq:feta}) we assume that the following conditions are satisfied.  
\begin{assump} The generic dampened function $\eta^{\alpha}$ and its derivative $\Done{\eta^{\alpha}}{x}$ have the same values at the boundaries of the restricted domain $[x_0,x_N] $\begin{eqnarray}   \eta^{\alpha}(x_0) & =& \eta^{\alpha}(x_N), \label{eq:percond1}\\
\Done{ \eta^{\alpha} }{x}(x_0)  & = & \Done{ \eta^{\alpha} }{x}(x_N). \label{eq:percond2} \end{eqnarray}
\label{assump:boundcond}
\end{assump}  
We approximate the integral of equation (\ref{eq:feta}) by
first restricting the integration interval to $[x_{0},x_{N}]$ and
then applying a composite quadrature rule with weights $\{w_{i}\}_{i=0}^{N}$.
Consequently, \begin{eqnarray} \widehat{\eta^{\alpha}}(\nu_i) & \approx & \int_{x_0}^{x_N}  e^{- {\bf i} x \nu_i } \eta^{\alpha}(x) dx \label{eq:dgtrunc}  \\
& \approx & \Delta x \sum_{j = 0}^{N} w_j e^{ -{\bf i} x_j \nu_i} \eta^{\alpha}(x_j) \nonumber \\
& = & \Delta x \cdot e^{ -{\bf i} x_0 \nu_i }\sum_{j = 0}^{N} w_j e^{ -{\bf i} ji \frac{ 2 \pi}{N} } e^{ -{\bf i} j \nu_0 \Delta x } \eta^{\alpha}(x_j) \nonumber \\
& = & \frac{2\pi}{\Delta \nu} e^{ -{\bf i} x_0 \nu_i } \mathfrak{D} \left[ \{ (-1)^j \tilde{w}_j \eta^{\alpha}(x_j)\}_{j=0}^{N-1} \right]_i  \label{eq:ceta}
\end{eqnarray}
for $i=0,1,...,N-1$ using Assumption \ref{assump:boundcond} since $N$ is even and $e^{ -{\bf i} \nu_0 \Delta x} = -1$ with \begin{equation} \tilde{w}_j = w_j + \delta_{N-j,N} w_N \end{equation}
where $\delta_{i,j}$ stands for the Kronecker's delta. At this point many quadrature rules are available. For example, one may use the composite trapezoidal
rule with weights of the form \begin{equation} w_i = 1 - \frac{1}{2}( \delta_{0,i} + \delta_{N,i}) \text{ , } i = 0,1,...,N \end{equation}
leading to $\tilde{w}_{i}=1$. Higher order composite quadrature rules will improve accuracy in presence of a smooth driver $f$.

A similar approach can be found in \cite{lordal:2008} which enhanced the discrete Fourier transform with a composite trapezoidal
quadrature rule to compute this last integral. However, \cite{lordal:2008} omits Assumption \ref{assump:boundcond} making the error analysis 
quite tedious and leading to considerable numerical errors, especially around the boundaries of the restricted
domain. In addition, we use a fixed space grid whereas \cite{lordal:2008} shift the space grid through time steps. 
The major difference, however, is that the scheme in \cite{lordal:2008} solves the Snell envelope and does not seek a numerical solution for BSDEs.

\begin{rmk}
From equation (\ref{eq:dgtrunc}), the restriction of the real line to the interval $[x_0,x_N]$ also solves the problem of integrability of the generic dampened function $\eta^{\alpha}$ that we pointed out preceding equation (\ref{eq:ftua}). Indeed, the restriction is essentially equivalent to considering the truncated function $\eta^\alpha \mathbf{1}_{[x_0,x_N]}$. Hence, $\eta^\alpha$ needs to be integrable only on the restricted domain.
\end{rmk}

The values of the functions $\theta$ are computed at the grid points
$\{x_{k}\}_{k=0}^{N-1}$ by combining equations (\ref{eq:intctheta})
and (\ref{eq:ceta}) 
\begin{eqnarray} \theta (x_k) & \approx & \sum_{j=0}^{N-1} e^{ {\bf i} \nu_j x_k }  \psi(\nu_j)  e^{ -{\bf i} x_0 \nu_j } \mathfrak{D}\left[ \{ (-1)^i \tilde{w}_i \eta^{\alpha}(x_i)\}_{i=0}^{N-1} \right]_j \nonumber \\
& = & e^{ {\bf i} k\nu_0 \Delta x} \sum_{j=0}^{N-1} e^{ {\bf i} jk \frac{2\pi}{N} }  \psi(\nu_j) \mathfrak{D}\left[ \{ (-1)^i \tilde{w}_i \eta^{\alpha} (x_i)\}_{i=0}^{N-1} \right]_j   \nonumber\\
& = & (-1)^k \mathfrak{D}^{-1}\left[ \left\{ \psi(\nu_j) \mathfrak{D}\left[ \{ (-1)^i \tilde{w}_i \eta^{\alpha} (x_i)\}_{i=0}^{N-1} \right]_j     \right\}_{j=0}^{N-1} \right]_k . \label{eq:compformula}
\end{eqnarray}
Since we use the DFT, the underlying trigonometric (and hence periodic)
interpolation allows us to set \begin{equation} \theta(x_N) = \theta(x_0). \end{equation}

In applications we shall consider functions $\eta^{\alpha}$ that do not satisfy Assumption~\ref{assump:boundcond}. To address this problem
we slightly modify the function $\eta$ by adding a linear function to obtain a modified dampened function $\eta_{\beta,\kappa}^{\alpha}$
defined as \begin{equation} \eta^{ \alpha }_{\beta, \kappa} (x) = e^{-\alpha x}( \eta (x) + \beta x + \kappa ). \label{eq:fintr} \end{equation}
The following lemma gives the appropriate choice for the dampening parameter
$\alpha\in\mathbb{R}$, and the coefficients $\beta\in\mathbb{R}$
and $\kappa\in\mathbb{R}$.
\begin{lem}
\label{lem:coefopt}
Suppose the real function $\eta\in\mathcal{C}^{1}[a,b]$ is differentiable
with \begin{equation*}\Done{\eta}{x}(a) \neq \Done{\eta}{x}(b) \end{equation*} 
and let $\eta_{\beta,\kappa}^{\alpha}$ be the modified, dampened
function defined in equation (\ref{eq:fintr}). Then \begin{eqnarray} \alpha & = & \frac{1}{b-a} \log \left( \frac{ \Done{\eta}{x}(b) + \beta }{ \Done{\eta}{x}(a) + \beta} \right)  \label{eq:optalpha} \\  
\kappa & = & \frac{ e^{-\alpha b} ( \eta(b) + \beta b) - e^{-\alpha a}(\eta(a) + \beta a)}{ e^{-\alpha a} - e^{-\alpha b}} \label{eq:optkappa}
\end{eqnarray}solve the system of nonlinear equations \begin{eqnarray}   \eta_{\beta,\kappa}^{\alpha}(a) & = & \eta_{\beta,\kappa}^{\alpha}(b) \label{eq:sysfft1}\\
\Done{ \eta_{\beta,\kappa}^{\alpha} }{x}(a) & = & \Done{ \eta_{\beta,\kappa}^{\alpha} }{x}(b)  \label{eq:sysfft2} \end{eqnarray} for any $\beta\notin\{-\Done{\eta}{x}(a),-\Done{\eta}{x}(b)\}$. In addition, if \begin{equation} \beta > \max \left( | \Done{\eta}{x}(b) |, |\Done{\eta}{x}(a) | \right) \label{eq:optbeta} \end{equation}
then $\alpha\in\mathbb{R}$ and $\kappa\in\mathbb{R}$.\end{lem}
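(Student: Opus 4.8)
The plan is to prove the lemma by a direct calculation that decouples the two boundary conditions \eqref{eq:sysfft1}--\eqref{eq:sysfft2}. First I would write $P(x) := \eta(x) + \beta x + \kappa$, so that $\eta^{\alpha}_{\beta,\kappa}(x) = e^{-\alpha x} P(x)$, and compute
\[
\Done{\eta^{\alpha}_{\beta,\kappa}}{x}(x) = e^{-\alpha x}\bigl(P'(x) - \alpha P(x)\bigr) = e^{-\alpha x}\bigl(\eta'(x) + \beta - \alpha P(x)\bigr),
\]
using $P'(x) = \eta'(x) + \beta$; this is where the differentiability of $\eta$ enters.

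Second, I would exploit the fact that condition \eqref{eq:sysfft1} reads $e^{-\alpha a}P(a) = e^{-\alpha b}P(b)$, which makes the two $\alpha P$ terms in \eqref{eq:sysfft2} cancel; hence, given \eqref{eq:sysfft1}, condition \eqref{eq:sysfft2} is equivalent to the scalar equation $e^{-\alpha a}(\eta'(a)+\beta) = e^{-\alpha b}(\eta'(b)+\beta)$. For $\beta \notin \{-\eta'(a), -\eta'(b)\}$ neither factor vanishes, so this rearranges to $e^{\alpha(b-a)} = \dfrac{\eta'(b)+\beta}{\eta'(a)+\beta}$, i.e.\ exactly \eqref{eq:optalpha}. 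Moreover $\eta'(a) \neq \eta'(b)$ forces this ratio away from $1$, so $\alpha \neq 0$ and in particular $e^{-\alpha a} \neq e^{-\alpha b}$.

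Third, with $\alpha$ as above I would solve \eqref{eq:sysfft1} for $\kappa$: expanding $P(a)$ and $P(b)$, the $\kappa$-terms give $\kappa\,(e^{-\alpha a} - e^{-\alpha b}) = e^{-\alpha b}(\eta(b)+\beta b) - e^{-\alpha a}(\eta(a)+\beta a)$, and dividing by the nonzero factor $e^{-\alpha a} - e^{-\alpha b}$ yields \eqref{eq:optkappa}. Reading the chain of equivalences backwards shows that the stated $(\alpha,\kappa)$ indeed satisfy both \eqref{eq:sysfft1} and \eqref{eq:sysfft2}. Finally, if $\beta > \max(|\eta'(a)|,|\eta'(b)|)$ then $\eta'(a)+\beta > 0$ and $\eta'(b)+\beta > 0$, so the argument of the logarithm in \eqref{eq:optalpha} is a positive real and $\alpha \in \mathbb{R}$; consequently $e^{-\alpha a} - e^{-\alpha b}$ is a nonzero real and $\kappa \in \mathbb{R}$ from \eqref{eq:optkappa}.

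The computation is elementary, so there is no serious obstacle; the only care needed is in the non-degeneracy bookkeeping --- the conditions $\beta \neq -\eta'(a), -\eta'(b)$ that permit dividing in the $\alpha$-equation, and the fact (from $\eta'(a) \neq \eta'(b)$) that $\alpha \neq 0$, which is what keeps the denominator in \eqref{eq:optkappa} from vanishing --- and, if one also wanted uniqueness of the solution rather than just existence, in fixing a branch of the complex logarithm when the ratio in \eqref{eq:optalpha} happens to be negative.
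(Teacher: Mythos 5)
Your proof is correct and follows essentially the same route as the paper's: compute $\Done{\eta^{\alpha}_{\beta,\kappa}}{x} = -\alpha\,\eta^{\alpha}_{\beta,\kappa} + e^{-\alpha x}(\Done{\eta}{x}+\beta)$, use \eqref{eq:sysfft1} to cancel the $-\alpha\,\eta^{\alpha}_{\beta,\kappa}$ terms in \eqref{eq:sysfft2} and solve for $\alpha$, then solve \eqref{eq:sysfft1} linearly for $\kappa$. Your extra bookkeeping (noting $\alpha\neq 0$ so the denominator of \eqref{eq:optkappa} is nonzero, and the branch issue when the ratio is negative) is a welcome refinement of details the paper leaves implicit, but it does not change the argument.
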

\begin{proof}
Equation (\ref{eq:sysfft1}) gives (\ref{eq:optkappa})
in a straightforward manner using basic algebra. Since $\eta$ is differentiable $\eta_{\beta,\kappa}^{\alpha}$
is also differentiable and 
\begin{equation*} 
\Done{ \eta_{\beta,\kappa}^{\alpha} }{x}(x) 
= - \alpha \eta_{\beta,\kappa}^{\alpha}(x) + e^{-\alpha x}\left( \Done{\eta}{x}(x)  + \beta \right)    
\end{equation*} 
and combining both equations (\ref{eq:sysfft1}) and (\ref{eq:sysfft2}) leads to (\ref{eq:optalpha}). 
Clearly, if the inequality (\ref{eq:optbeta}) holds then both $(\Done{\eta}{x}(b) + \beta)$
and $(\Done{\eta}{x}(a) + \beta)$ are strictly positive and $\alpha\in\mathbb{R}$.
\qquad\end{proof}

The transform of equation (\ref{eq:fintr}) may seem over parametrized. However, using only two parameters may lead to complex parameters or to an inconsistent system.

\begin{rmk}
When implementing the method, the values of the derivative $\Done{\eta}{x}$
at $x_{0}$ and $x_{N}$ can be approximated by forward or backward finite differences. 
\end{rmk}

\begin{rmk} \label{rem:slope}
A positive constant, $\epsilon >0$,  which represents the minimal slope
allowed in the linear transform $\beta x+\kappa$ is used to ensure equation~(\ref{eq:optbeta}) is enforced.
Set \begin{equation} \beta=\epsilon+\max\left(|\Done{\eta}{x}(x_N)|,|\Done{\eta}{x}(x_0)|\right). \end{equation}
\end{rmk}

Under the transformation of equation (\ref{eq:fintr}), the computation
of our approximate solution is not significantly more complex. Since the function $\theta$ is a (dampened) conditional expectation, properties of the conditional expectation allows the necessary adjustments as shown in the following theorem.
\begin{thm}
\label{thm:adjust} Let $\eta:[a,b]\rightarrow\mathbb{R}$ be an integrable
function and define $\eta_{\beta,\kappa}:[a,b]\rightarrow\mathbb{R}$
as 
\begin{equation*} 
\eta_{\beta,\kappa}(x) = \eta(x) + \beta x + \kappa 
\end{equation*}
such that $\eta_{\beta,\kappa}^{\alpha}$ is the modified, dampened
function of $\eta$ defined by equation (\ref{eq:fintr}). Then
the function $\theta : [a,b]\rightarrow\mathbb{R}$ of equation (\ref{eq:Pint})
admits the alternative representation 
\begin{equation} \theta(x) 
= \frac{1}{2\pi} \int_{-\infty}^{\infty} e^{ {\bf i} \nu x } \widehat{\eta^{\alpha}_{ \beta, \kappa}} (\nu) \psi( \nu ) d\nu - H(x,\alpha, \beta, \kappa) 
\end{equation} 
where 
\begin{equation} 
H(x,\alpha,\beta,\kappa)
= \begin{cases}  e^{-\alpha x} \beta &\text{  if }
\psi(\nu) = (\alpha + {\bf i} \nu) \phi(\nu - {\bf i} \alpha) \\  e^{-\alpha x} (\beta x + \kappa) &\text{  if }
\psi(\nu) = \phi(\nu - {\bf i} \alpha) . 
\end{cases} \label{eq:Hfunc}
\end{equation}
\end{thm}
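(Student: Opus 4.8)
The plan is to exploit the fact, already implicit in the derivation of~(\ref{eq:solu1})--(\ref{eq:solu2}), that once the dampening factor $e^{\alpha x}$ is dropped the function $\theta$ in~(\ref{eq:Pint}) is nothing but a dampened conditional expectation, depending \emph{linearly} on $\eta$. Retracing the computation~(\ref{eq:fouriertu})--(\ref{eq:fnua}) with a generic function $\zeta$ whose dampened version is integrable, and comparing with~(\ref{eq:iuf}), (\ref{eq:guf}), (\ref{eq:solu1}), (\ref{eq:solu2}), one obtains
\begin{align*}
\frac{1}{2\pi}\int_{-\infty}^{\infty} e^{{\bf i}\nu x}\,\widehat{\zeta^{\alpha}}(\nu)\,\phi(\nu-{\bf i}\alpha)\,d\nu
&= e^{-\alpha x}\int_{-\infty}^{\infty}\zeta(y)h(y-x)\,dy
= e^{-\alpha x}\,\CEsp{}{\zeta(W_{t_{i+1}})\,|\,W_{t_i}=x}{},\\
\frac{1}{2\pi}\int_{-\infty}^{\infty} e^{{\bf i}\nu x}\,\widehat{\zeta^{\alpha}}(\nu)\,(\alpha+{\bf i}\nu)\phi(\nu-{\bf i}\alpha)\,d\nu
&= \frac{e^{-\alpha x}}{\Delta_i}\int_{-\infty}^{\infty}(y-x)\zeta(y)h(y-x)\,dy
= \frac{e^{-\alpha x}}{\Delta_i}\,\CEsp{}{\zeta(W_{t_{i+1}})\Delta W_i\,|\,W_{t_i}=x}{}.
\end{align*}
Hence, for either of the two admissible choices of $\psi$ appearing in~(\ref{eq:solu1})--(\ref{eq:solv2}), $\theta=\theta_{\eta}$ is a fixed linear functional of $\eta$; this is the structural fact driving the proof and is what the sentence ``$\theta$ is a (dampened) conditional expectation'' preceding the theorem refers to.

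Next I would apply the two identities above with $\zeta=\eta_{\beta,\kappa}$, i.e. $\zeta(y)=\eta(y)+\beta y+\kappa$. By linearity of the Fourier transform, $\widehat{\eta^{\alpha}_{\beta,\kappa}}=\widehat{\eta^{\alpha}}+\beta\,\widehat{(\,\cdot\,)^{\alpha}}+\kappa\,\widehat{(1)^{\alpha}}$, and by linearity of the conditional expectation,
\begin{equation*}
\frac{1}{2\pi}\int_{-\infty}^{\infty} e^{{\bf i}\nu x}\,\widehat{\eta^{\alpha}_{\beta,\kappa}}(\nu)\,\psi(\nu)\,d\nu
= \theta(x) + \beta\,\theta_{\mathrm{id}}(x) + \kappa\,\theta_{1}(x),
\end{equation*}
where $\theta_{\mathrm{id}}$ and $\theta_{1}$ are the functions~(\ref{eq:Pint}) built from $y\mapsto y$ and $y\mapsto 1$. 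The two corrections are then evaluated from elementary properties of Brownian motion: conditionally on $W_{t_i}=x$ one has $W_{t_{i+1}}=x+\Delta W_i$ with $\Delta W_i\sim\mathcal{N}(0,\Delta_i)$, so $\CEsp{}{W_{t_{i+1}}\,|\,W_{t_i}=x}{}=x$, $\CEsp{}{1\,|\,W_{t_i}=x}{}=1$, $\CEsp{}{\Delta W_i\,|\,W_{t_i}=x}{}=0$ and $\CEsp{}{W_{t_{i+1}}\Delta W_i\,|\,W_{t_i}=x}{}=\Delta_i$. Substituting, $\beta\,\theta_{\mathrm{id}}(x)+\kappa\,\theta_{1}(x)$ equals $e^{-\alpha x}(\beta x+\kappa)$ when $\psi(\nu)=\phi(\nu-{\bf i}\alpha)$ and $e^{-\alpha x}\beta$ when $\psi(\nu)=(\alpha+{\bf i}\nu)\phi(\nu-{\bf i}\alpha)$; rearranging yields exactly the claimed representation with $H$ given by~(\ref{eq:Hfunc}).

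Everything above is routine once the conditional-expectation picture and linearity are in hand, so the single point demanding care is integrability. On the whole line the dampened affine function $e^{-\alpha x}(\beta x+\kappa)$ is not integrable and $\widehat{(\,\cdot\,)^{\alpha}}$, $\widehat{(1)^{\alpha}}$ do not exist in the classical sense. I would resolve this by reading~(\ref{eq:Pint}) at the idealized level at which $\theta$ genuinely coincides with the dampened conditional expectation (resp.\ its $\Delta W_i$-weighted analogue), where the affine term is integrated against the Gaussian kernel $h$ — which has finite moments of all orders — and contributes precisely the finite corrections $\beta x+\kappa$ and $\beta$; the spatial restriction to $[a,b]=[x_0,x_N]$ from the Remark preceding the theorem then enters only at the numerical-implementation stage. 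Making that identification fully precise, and checking consistency with the convention adopted for $\theta$, is the one step I expect to require attention; the rest is linearity together with the four Brownian moments listed above.
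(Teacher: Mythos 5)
Your proposal is correct and follows essentially the same route as the paper: identify the Fourier integral $\theta$ with the dampened conditional expectation (resp.\ its $\Delta W_i$-weighted analogue), use linearity of the conditional expectation to absorb the affine term $\beta x + \kappa$, and evaluate the resulting corrections from the Gaussian moments $\CEsp{}{W_{t_{i+1}}\,|\,W_{t_i}=x}{}=x$ and $\CEsp{}{W_{t_{i+1}}\Delta W_i\,|\,W_{t_i}=x}{}=\Delta_i$, which yields exactly the two cases of $H$ in~(\ref{eq:Hfunc}). Your explicit flagging of the integrability of the dampened affine term is a reasonable extra caution that the paper glosses over, but it does not change the argument.
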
 
\begin{proof}
First, let $\psi(\nu) = (\alpha + {\bf i} \nu) \phi(\nu - {\bf i} \alpha)$.
By definition, we know that 
\begin{eqnarray*} \theta(x) & = & \frac{1}{2\pi} \int_{-\infty}^{\infty} e^{ {\bf i} \nu x } \widehat{\eta^{\alpha}}(\nu) \psi( \nu ) d\nu \nonumber \\
& = & \frac{ e^{-\alpha x} }{ \Delta_i } \CEsp{}{ \eta(W_{t_{i+1} }) \Delta W_i | W_{t_i} = x}{}  \\
& = & \frac{ e^{-\alpha x} }{ \Delta_i } \left(  \CEsp{}{ \left(\eta(W_{t_{i+1} }) + \beta W_{t_{i+1}} + \kappa \right)\Delta W_i | W_{t_i} = x}{}  - \beta \Delta_i \right) \\
& = & \frac{ e^{-\alpha x} }{ \Delta_i } \CEsp{}{ \eta_{\beta,\kappa}(W_{t_{i+1} }) \Delta W_i | W_{t_i} = x}{} - e^{-\alpha x} \beta \\
& = & \frac{1}{2\pi} \int_{-\infty}^{\infty} e^{ {\bf i} \nu x } \widehat{\eta^{\alpha}_{ \beta, \kappa}} (\nu) \psi( \nu ) d\nu - e^{-\alpha x} \beta. 
\end{eqnarray*}

Similarly, if $\psi(\nu) = \phi(\nu - {\bf i} \alpha)$, we have 
\begin{eqnarray*} \theta(x) & = & \frac{1}{2\pi} \int_{-\infty}^{\infty} e^{ {\bf i} \nu x } \widehat{\eta^{\alpha}}(\nu) \psi( \nu ) d\nu  \\
& = & { e^{-\alpha x} } \CEsp{}{ \eta(W_{t_{i+1} })  | W_{t_i} = x}{}  \\
& = & { e^{-\alpha x} } \CEsp{}{ \eta_{\beta,\kappa}(W_{t_{i+1} }) | W_{t_i} = x}{} - e^{-\alpha x} (\beta x + \kappa) \\
& = & \frac{1}{2\pi} \int_{-\infty}^{\infty} e^{ {\bf i} \nu x } \widehat{\eta^{\alpha}_{ \beta, \kappa}} (\nu) \psi( \nu ) d\nu - e^{-\alpha x} (\beta x + \kappa). \end{eqnarray*}
\end{proof}

Theorem \ref{thm:adjust} shows that the computational formula of equation (\ref{eq:compformula}) can be applied to the dampened transform function $\eta^{\alpha}_{\beta, \kappa}$ which satisfies Assumption \ref{assump:boundcond} after an appropriate choice of the coefficients $\alpha$, $\beta$ and $\kappa$ using Lemma~\ref{lem:coefopt}. One recovers the function $\theta$ by subtracting the function $H$. 

The implementation of the convolution method gives the numerical approximations $\{ u_{ik} \}_{k=0}^N$ and $\{ \dot{u}_{ik} \}_{k=0}^N$ to the approximate solutions $u_i$ and $\dot{u}_i$ at space  nodes $\{ x_k \}_{k=0}^N$ for any  time node $t_i$, $i = 0,1,2,...,n-1$. An approximate solution to the BSDE consists of a (linear) interpolation of a Brownian path through the node values $\{ u_{ik} \}_{k=0}^N$ and $\{ \dot{u}_{ik} \}_{k=0}^N$ for $i=0,1,2,...,n-1$.  The following algorithm summarizes the steps necessary to implement the convolution method.

\begin{algo} Convolution Method
\begin{enumerate}
\item Discretize the restricted real space $[-\frac{l}{2},\frac{l}{2}]$ and the restricted Fourier space $[-\frac{L}{2},\frac{L}{2}]$ with $N$ space steps so to have the real space nodes $\{ x_{k}\}_{k=0}^{N}$ and the Fourier space nodes $\{ \nu_{k}\}_{k=0}^{N}$
\item Set $u_n(x_{k}) = g(x_{k})$
\item For any $i$ from $n-1,\ldots,0$
\begin{enumerate}
\item Compute $\alpha$, $\beta$ and $\kappa$ using Lemma \ref{lem:coefopt}, such that
\begin{equation} \eta^{\alpha}_{}  = \left( u_{i+1} \right)^{\alpha}_{\beta,\kappa} \end{equation} and $\eta^{\alpha}_{}$ satisfies the boundary conditions.
\item Compute $\theta(x_{k})$ through equation (\ref{eq:compformula}) for $k=0,1,...,N$ with \begin{equation} \psi(\nu) = \phi(\nu - {\bf i} \alpha) \end{equation} and retrieve the values $\tilde{u}_{ik}$ as \begin{equation} \tilde{u}_{ik} = e^{\alpha x_{k}}\theta(x_{k}) -  \left( \beta x_{k} + \kappa \right)  \end{equation} through Theorem \ref{thm:adjust}.
\item Compute $\theta(x_{ik})$ through equation (\ref{eq:compformula}) for $k=0,1,...,N$ with \begin{equation} \psi(\nu) = (\alpha + {\bf i}\nu)\phi(\nu - {\bf i} \alpha) \end{equation} and retrieve the values $\dot{u}_{ik}$ as \begin{equation} \dot{u}_{ik} = e^{\alpha x_{k}}\theta(x_{k}) -  \beta  \end{equation}
through Theorem \ref{thm:adjust}.
\item Compute the values $u_{ik}$ as 
\begin{equation} u_{ik} = \tilde{u}_{ik} + \Delta_i f(t_i,\tilde{u}_{ik}, \dot{u}_{ik}) \end{equation} for $k=0,1,...,N_iN$ through equation (\ref{eq:uf}) when using the explicit Euler scheme II.%
\end{enumerate}
\end{enumerate}
\end{algo}

We next consider the problem of computational error. 

\section{Error Analysis} \label{sec:error}
The convolution method induces two main types of error.  In addition to the time discretization error $E_{\pi}$ discussed in Section \ref{sec:timedisc}, there is a space discretization error and we focus on this last error term. We limit our analysis to the explicit Euler scheme II since equivalent results are obtained for the explicit Euler scheme I using the same techniques.

Throughout this section $\{ {\bf{u}}_{ik} \}_{k=0}^N$, $\{ \tilde{\bf{u}}_{ik} \}_{k=0}^N$ and $\{ \dot{\bf{u}}_{ik} \}_{k=0}^N$ denote the numerical solution obtained from the convolution method at time mesh $t_i$, $i=0,1,...,n-1$ knowing the approximate solutions $u_{i+1}(x_k)$ and $\dot{u}_{i+1}(x_k)$ at time mesh $t_{i+1}$.
The convolution method induces a space discretization error when approximating the values of $u_i(x_k)$ and $\dot{u}_i(x_k)$ by $u_{ik}$ and $\dot{u}_{ik}$ respectively. We will particularly describe the local behaviour of this error term.  We define it as 
\begin{equation} E_{ik} :=  \norm{u_i(x_k) - {\bf{u}}_{ik} } + \norm{\dot{u}_i(x_k) - \dot{\bf{u}}_{ik} } \label{eq:localDE}. \end{equation}
The following lemma describes the DFT accuracy in approximating the Fourier coefficients and proves useful in the derivation of a space discretization error bound. We skip the proof since the results are well known (see \cite[ Theorem 3.4, p.\ 140]{plato:2003} and  \cite[Theorem 4.4, p.\ 85]{vretblad:2003}).
\begin{lem} \label{lem:fourier} 
Suppose the integrable function 
$u: \left[- \frac{l}{2},\frac{l}{2}\right]\rightarrow \mathbb{R}$ with\\ $u(-\frac{l}{2})= u(\frac{l}{2})$ 
admits the Fourier series expansion 
\begin{equation} 
u(x) = \sum_{k=-\infty}^{\infty} c_k e^{ {\bf i} k \frac{2\pi}{l} x } \text{, } x \in \left[- \frac{l}{2},\frac{l}{2}\right] 
\end{equation}  
and  $\{x_k\}_{k=0}^{N-1}$ are the nodes of the equidistant grid of $\left[- \frac{l}{2},\frac{l}{2}\right]$ 
such that \\$x_k = -\frac{l}{2} + k \Delta $ where  $N \in \mathbb{N}$ is even and $\Delta  = N^{-1}l$. 

If $u \in \mathcal{C}^2$ then 
\begin{equation}  
c_{k - \frac{N}{2}} = (-1)^{ k - \frac{N}{2} }\mathcal{D}\left[ \{ (-1)^j u(x_j)  \}_{j=0}^{N-1} \right]_k + \mathcal{O}( \Delta^2 ) 
\end{equation} 
for $k=0,1,...,N-1$ and 
\begin{equation} 
\norm{c_k} < C k^{-2} \text{ ,} 
\end{equation}  
for $k \in \mathbb{Z} \backslash \{0\}$ and some constant $C>0$ depending on $\Dtone{u}{x}$. 
Consequently, 
\begin{equation} 
u(x) = \sum_{k= -\frac{N}{2} }^{\frac{N}{2} - 1} c_ke^{ {\bf i} k \frac{2\pi}{l} x } + \mathcal{O}(\Delta) \text{, } \forall x \in \left[- \frac{l}{2},\frac{l}{2}\right].  
\end{equation}
\end{lem}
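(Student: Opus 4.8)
The plan is to reduce all three assertions to a single decay estimate on the Fourier coefficients, $\norm{c_k} < C\norm{k}^{-2}$, together with an exact aliasing identity expressing the DFT of the (alternately signed) samples in terms of the $c_k$. The quadrature-type error $\mathcal{O}(\Delta^2)$ and the truncation error $\mathcal{O}(\Delta)$ both then fall out of this decay estimate.

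First I would establish the coefficient decay. Writing $c_k = \frac{1}{l}\int_{-l/2}^{l/2} u(x)\, e^{-{\bf i} k \frac{2\pi}{l} x}\,dx$ and integrating by parts once, the boundary contribution is a multiple of $(-1)^k\bigl(u(\tfrac{l}{2}) - u(-\tfrac{l}{2})\bigr)$, which vanishes by the hypothesis $u(-\tfrac{l}{2}) = u(\tfrac{l}{2})$; this leaves $c_k$ expressed through $\int_{-l/2}^{l/2} \Done{u}{x}(x)\, e^{-{\bf i} k\frac{2\pi}{l} x}\,dx$ with an extra factor $\mathcal{O}(\norm{k}^{-1})$. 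A second integration by parts --- which does \emph{not} require any endpoint matching of $\Done{u}{x}$, since the new boundary term is already $\mathcal{O}(\norm{k}^{-1})$ and $\Dtone{u}{x}$ is bounded on $[-\tfrac{l}{2},\tfrac{l}{2}]$ --- yields $\norm{c_k} \le C\norm{k}^{-2}$, with $C$ controlled by $\normf{\Done{u}{x}}{\infty}$ and $\normf{\Dtone{u}{x}}{\infty}$. The tail bound $\sum_{\norm{k}\ge m}\norm{c_k} = \mathcal{O}(m^{-1})$ is then immediate; applied with $m$ of order $N/2$ it gives the last display of the lemma, since truncating the Fourier series to the frequencies $-\tfrac{N}{2}\le k\le \tfrac N2 - 1$ incurs an error bounded by $\sum_{\norm k\ge N/2}\norm{c_k} = \mathcal{O}(N^{-1}) = \mathcal{O}(\Delta)$.

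Next I would derive the aliasing identity. Substituting the expansion $u(x_j) = \sum_r c_r\, e^{{\bf i} r \frac{2\pi}{l} x_j}$ into $\mathcal{D}\bigl[\{(-1)^j u(x_j)\}_{j=0}^{N-1}\bigr]_k$ and using $x_j = -\tfrac l2 + j\Delta$, $\Delta = l/N$, so that $e^{{\bf i} r\frac{2\pi}{l}x_j} = (-1)^r e^{{\bf i} r j\frac{2\pi}{N}}$ and $(-1)^j = e^{-{\bf i} j\frac{2\pi}{N}\frac N2}$, the inner sum over $j$ collapses by the orthogonality relation $\frac1N\sum_{j=0}^{N-1} e^{{\bf i} r j\frac{2\pi}{N}} = \mathbf{1}\{N \mid r\}$. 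Keeping only the surviving frequencies $r \equiv k + \tfrac N2 \pmod N$ and noting $(-1)^r = (-1)^{k - N/2}$ for all of them (the exponents differ by even multiples of $N$), one obtains the \emph{exact} formula
\begin{equation*}
(-1)^{k-\frac N2}\,\mathcal{D}\bigl[\{(-1)^j u(x_j)\}_{j=0}^{N-1}\bigr]_k = \sum_{s\in\mathbb{Z}} c_{k-\frac N2 + sN}.
\end{equation*}
Isolating the $s=0$ term leaves the aliasing error $\sum_{s\ne 0} c_{k-\frac N2 + sN}$; since $\norm{k - \tfrac N2}\le \tfrac N2$ for $0\le k\le N-1$, every index with $s\ne 0$ has modulus at least $(\norm s - \tfrac12)N$, so the decay estimate bounds the error by $\frac{C}{N^2}\sum_{s\ne 0}(\norm s - \tfrac12)^{-2} = \mathcal{O}(N^{-2}) = \mathcal{O}(\Delta^2)$, which is the first display.

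The step I expect to be the crux is obtaining the $\mathcal{O}(\Delta^2)$ bound \emph{uniformly in} $k$. A naive quadrature-error argument will not suffice: over the relevant range the integrand $u(x)e^{-{\bf i} k\frac{2\pi}{l}x}$ has second derivative of size $\sim k^2 \sim N^2$, so the classical $\mathcal{O}(\Delta^2\normf{f''}{\infty})$ estimate degrades to $\mathcal{O}(1)$. Routing the argument through the exact aliasing identity together with the sharp decay $\norm{c_k} = \mathcal{O}(\norm k^{-2})$, rather than through a direct quadrature bound, is exactly what restores the second-order rate. The remaining manipulations --- geometric summation and the two integrations by parts --- are routine and I would not expand them.
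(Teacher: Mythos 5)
Your argument is correct and complete: the twice-integrated-by-parts decay bound $\norm{c_k}\leq C\norm{k}^{-2}$ (using only the matching of $u$, not of $\Done{u}{x}$, at the endpoints), the exact aliasing identity $(-1)^{k-\frac{N}{2}}\mathfrak{D}[\{(-1)^j u(x_j)\}]_k=\sum_{s\in\mathbb{Z}}c_{k-\frac{N}{2}+sN}$, and the resulting $\mathcal{O}(\Delta^2)$ and $\mathcal{O}(\Delta)$ estimates all check out, and you correctly identify that the aliasing route (rather than a direct quadrature bound) is what gives uniformity in $k$. The paper itself omits the proof and defers to standard references on trigonometric interpolation and Fourier coefficient decay; your write-up is essentially that standard argument, so there is nothing to reconcile.
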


The next theorem gives an error bound for the space discretization error under smoothness conditions on the BSDE coefficients $f$ and $g$.
\begin{thm} \label{thm:LDE}
Suppose $f \in \mathcal{C}^{1,2,2}$ and $g \in \mathcal{C}^2$. Then for any $i=0,1,...,n-1$ and $k=0,1,...,N$, the convolution method applied on the truncated interval $\left[ -\frac{l}{2},\frac{l}{2}\right]$ yields a (local) discretization error of the form
\begin{equation} E_{ik} = \chi(x_k) +  \mathcal{O}\left( \Delta x \right) + \mathcal{O}\left( e^{-K \Delta^{-1}_i l^2} \right) \end{equation}  where the extrapolation error $\chi$ satisfies \begin{equation} \norm{\chi(x_k)} \leq C \left(  \left( \int_{ \frac{l}{2} - \norm{x_k} }^{\frac{l}{2}}h(y)dy \right)^{\frac{1}{2}} +  \Delta^{\frac{1}{2}} \right) \label{eq:extraperr} \end{equation} for some positive constants $C,K>0$ depending on the driver $f$, the function $g$, and the terminal time $T$ when using the trapezoidal quadrature rule.
\end{thm}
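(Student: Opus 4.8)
Since the theorem concerns the \emph{local} (one step) space discretization error, the plan is to take the input node values $u_{i+1}(x_j)$ as exact and bound the error committed in producing $u_{ik},\dot u_{ik}$ from the composite formula~(\ref{eq:compformula}). The first move is a reduction. By Lemma~\ref{lem:coefopt} the parameters $\alpha,\beta,\kappa$ are chosen so that the modified dampened function $\eta^\alpha_{\beta,\kappa}$ (with $\eta=u_{i+1}$) satisfies Assumption~\ref{assump:boundcond}; and since $g\in\mathcal C^2$, convolution with the Gaussian $h$ is smoothing, and $f\in\mathcal C^{1,2,2}$ composed with $\mathcal C^2$ functions is again $\mathcal C^2$, the iterate $u_{i+1}$ is of class $\mathcal C^2$, so $\eta^\alpha_{\beta,\kappa}\in\mathcal C^2$ on the truncated interval and Lemma~\ref{lem:fourier} applies to it. By Theorem~\ref{thm:adjust} the correction $H(x_k,\alpha,\beta,\kappa)$ is subtracted identically from the exact and from the computed value, so $u_i(x_k)-u_{ik}=e^{\alpha x_k}(\theta_{\beta,\kappa}(x_k)-\theta^{\mathrm{num}}_{\beta,\kappa}(x_k))$, and similarly for $\dot u$, where $\theta_{\beta,\kappa}(x)=\frac1{2\pi}\int_{\mathbb R}e^{{\bf i}\nu x}\widehat{\eta^\alpha_{\beta,\kappa}}(\nu)\psi(\nu)\,d\nu$ with $\psi(\nu)=\phi(\nu-{\bf i}\alpha)$ (resp.\ $(\alpha+{\bf i}\nu)\phi(\nu-{\bf i}\alpha)$) and $\theta^{\mathrm{num}}_{\beta,\kappa}(x_k)$ is the right-hand side of~(\ref{eq:compformula}). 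Because the $\alpha$ returned by Lemma~\ref{lem:coefopt} is $\mathcal O(1/l)$ (a logarithm of a bounded ratio of derivatives divided by $l$), the factor $e^{\alpha x_k}$ is bounded on $[-\tfrac l2,\tfrac l2]$ by a constant depending only on $f$, $g$ and the minimal-slope parameter $\epsilon$ of Remark~\ref{rem:slope}, so it remains to estimate $\theta_{\beta,\kappa}(x_k)-\theta^{\mathrm{num}}_{\beta,\kappa}(x_k)$.

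The extrapolation term $\chi$ comes from the spatial truncation of the transform integral from $\mathbb R$ to $[x_0,x_N]$ in~(\ref{eq:dgtrunc}). Using $\mathfrak F^{-1}[\phi(\cdot-{\bf i}\alpha)](z)=e^{-\alpha z}h(-z)$ together with the Fourier differentiation identity behind~(\ref{eq:fnua}) for the $\dot u$ kernel, one recognizes $e^{\alpha x}\theta_{\beta,\kappa}(x)=\mathbf E[\eta_{\beta,\kappa}(W_{t_{i+1}})\mid W_{t_i}=x]$, resp.\ $\frac1{\Delta_i}\mathbf E[\eta_{\beta,\kappa}(W_{t_{i+1}})\,\Delta W_i\mid W_{t_i}=x]$; restricting $W_{t_{i+1}}$ to $[x_0,x_N]$ therefore costs exactly $\mathbf E[\eta_{\beta,\kappa}(W_{t_{i+1}})\,\mathbf 1_{\{W_{t_{i+1}}\notin[x_0,x_N]\}}\mid W_{t_i}=x_k]$ (times $1$ or $\Delta W_i/\Delta_i$). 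I would bound this by the Cauchy--Schwarz inequality: it is at most a constant --- controlled by $\|\eta_{\beta,\kappa}(W_{t_{i+1}})\|_{L^2}$ (for the $\dot u$ term, by $\|\eta_{\beta,\kappa}(W_{t_{i+1}})\|_{L^4}$ and $\mathbf E[\Delta W_i^4]=3\Delta_i^2$), which is controlled uniformly in $i$ by propagating the square integrability of $\xi$ through the scheme via the $L^2$-nonexpansiveness of conditional expectation and the Lipschitz growth of $f$ --- times $\mathbf P(W_{t_{i+1}}\notin[x_0,x_N]\mid W_{t_i}=x_k)^{1/2}$. Writing $\mathbf P(W_{t_{i+1}}\notin[x_0,x_N]\mid W_{t_i}=x_k)\le 2\int_{l/2-|x_k|}^{l/2}h+2\int_{l/2}^{\infty}h$ and using $\int_{l/2}^{\infty}h\le e^{-l^2/(8\Delta_i)}$ converts this into $\chi(x_k)+\mathcal O(e^{-K\Delta_i^{-1}l^2})$ with $\chi$ as in~(\ref{eq:extraperr}).

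The remaining approximations in Section~\ref{sec:3.3} contribute the $\mathcal O(\Delta x)$ term and further exponentially small errors. Truncating the Fourier integral to $[-\tfrac L2,\tfrac L2]$ and replacing it by a lower Riemann sum in~(\ref{eq:intctheta}) produces, because $|\phi(\nu-{\bf i}\alpha)|=e^{\Delta_i\alpha^2/2}e^{-\Delta_i\nu^2/2}$ decays at Gaussian rate, a Fourier-tail error that is $\mathcal O(e^{-K'\Delta_i\Delta x^{-2}})=\mathcal O(\Delta x)$; and a Poisson-summation (aliasing) analysis shows the discrete/continuous discrepancy is governed by the mass of the Gaussian-mollified, essentially compactly supported spatial profile beyond distance $2\pi/\Delta\nu=l$, hence is again $\mathcal O(\Delta x)+\mathcal O(e^{-K\Delta_i^{-1}l^2})$. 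Finally, for the quadrature/DFT step~(\ref{eq:ceta})--(\ref{eq:compformula}), Lemma~\ref{lem:fourier} gives that the discrete transform of the samples $\{(-1)^j\eta^\alpha_{\beta,\kappa}(x_j)\}$ reproduces the Fourier coefficients $c_{k'}$ of $\eta^\alpha_{\beta,\kappa}$ up to $\mathcal O(\Delta x^2)$, while the Nyquist relation $Ll=2\pi N$ makes the frequency nodes $\nu_j$ coincide with the Fourier-series frequencies $2\pi k'/l$ ($-\tfrac N2\le k'<\tfrac N2$); hence~(\ref{eq:compformula}) equals $\sum_{k'}c_{k'}\psi(2\pi k'/l)e^{2\pi{\bf i}k'x_k/l}$ up to $\mathcal O(\Delta x^2)\sum_j|\psi(\nu_j)|=\mathcal O(\Delta x)$, the grid sum of $|\psi(\nu_j)|$ being finite by the Gaussian decay of $\phi$. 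That trigonometric sum is the spatial convolution of the partial Fourier sum of $\eta^\alpha_{\beta,\kappa}$ with $\mathfrak F^{-1}[\psi]$, which differs from $\theta_{\beta,\kappa}(x_k)$ only by the $\mathcal O(\Delta x)$ Fourier-series truncation error of Lemma~\ref{lem:fourier} (using $|c_{k'}|<C|k'|^{-2}$, so $\sum_{k'}|c_{k'}|<\infty$) and by a boundary/periodization remainder that the estimate of the previous paragraph again bounds by $\chi(x_k)+\mathcal O(e^{-K\Delta_i^{-1}l^2})$.

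Collecting the three contributions yields $E_{ik}=\chi(x_k)+\mathcal O(\Delta x)+\mathcal O(e^{-K\Delta_i^{-1}l^2})$ for $0\le k<N$, with all constants expressed through the $\mathcal C^2$ data of $f$ and $g$ and the terminal time $T$; the endpoint $k=N$ is handled by the periodicity $\theta^{\mathrm{num}}_{\beta,\kappa}(x_N)=\theta^{\mathrm{num}}_{\beta,\kappa}(x_0)$ of the trigonometric interpolation, the resulting discrepancy with $\theta_{\beta,\kappa}(x_N)$ being absorbed into $\chi(x_N)$ and the exponential term. I expect the main obstacle to be the bookkeeping of the constants: keeping them uniform in the step index $i$ --- which is exactly what forces the $L^2$/Cauchy--Schwarz route in the spatial-truncation step rather than cruder sup-norm estimates --- and correctly attributing each exponentially small remainder to the $\mathcal O(e^{-K\Delta_i^{-1}l^2})$ bucket rather than letting it inflate the $\mathcal O(\Delta x)$ term; the $\dot u_i$ component needs in addition the Gaussian identity $(y-x)h(y-x)=-\Delta_i h'(y-x)$ (equivalently the Fourier differentiation property used in~(\ref{eq:fnua})) to absorb the extra $\Delta W_i$ factor.
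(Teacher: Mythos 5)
Your architecture matches the paper's: reduce to the periodic case ($\alpha=\beta=\kappa=0$) via Lemma~\ref{lem:coefopt} and Theorem~\ref{thm:adjust}, split the one-step error into a Gaussian far tail ($\mathcal O(e^{-K\Delta_i^{-1}l^2})$ by Cauchy--Schwarz/Chernoff), a periodization/extrapolation term $\chi$, and a Fourier-truncation-plus-DFT term ($\mathcal O(\Delta x)$ via Lemma~\ref{lem:fourier}), then combine through the Lipschitz property of $f$. The $Y$-component of your argument goes through and even arrives at the square-root bound of~(\ref{eq:extraperr}) by a slightly different route (Cauchy--Schwarz against the indicator of the truncated set, rather than the paper's direct sup-norm bound on $\int(u_{i+1}-T_\infty)h$, which gives the sharper non-square-rooted estimate).

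The gap is in the $Z$-component. Your stated bound is
\begin{equation*}
\frac{1}{\Delta_i}\,\mathbf{E}\bigl[\,\norm{\eta}\,\norm{\Delta W_i}\,\mathbf 1_A\,\bigr]
\;\le\;\frac{1}{\Delta_i}\,\normf{\eta}{L^4}\,\normf{\Delta W_i}{L^4}\,\mathbf P(A)^{1/2}
\;=\;\mathcal O\!\left(\Delta_i^{-1/2}\right)\left(\int_{\frac l2-\norm{x_k}}^{\frac l2}h(y)\,dy\right)^{\!1/2}+\ \cdots,
\end{equation*}
which carries an extra factor $\Delta_i^{-1/2}$: the resulting constant in~(\ref{eq:extraperr}) would depend on the time step, whereas the theorem requires $C$ to depend only on $f$, $g$ and $T$. (It is not harmless: it changes the order of the extrapolation error in $\Delta_i$, even if the corollary on $\chi\to0$ would survive.) The paper avoids this by exploiting that $\upsilon=u_{i+1}-T_\infty$ vanishes at the centre of the window: writing $y\upsilon(y)=y^2\frac{\upsilon(y)-\upsilon(0)}{y}$ and using a Taylor/symmetry argument, the integrand acquires a $y^2$ weight supported on the boundary band, and Cauchy--Schwarz then gives $\Delta_i^{-1}\int_A y^2h\le\Delta_i^{-1}\bigl(\int y^4h\bigr)^{1/2}\bigl(\int_Ah\bigr)^{1/2}=\sqrt3\,\bigl(\int_Ah\bigr)^{1/2}$, with the Gaussian fourth moment $3\Delta_i^2$ exactly cancelling the $\Delta_i^{-1}$. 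The Gaussian identity $yh(y)=-\Delta_i h'(y)$ that you mention only in your closing sentence would accomplish the same cancellation by integration by parts (the boundary terms being exponentially small or zero), but you do not actually carry out either mechanism; the route you do execute does not deliver the $\Delta_i$-uniform constant.
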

\begin{proof} Suppose the solution $u_{i+1}$ at time $t_{i+1}$ is known. Since $f \in \mathcal{C}^{1,2,2}$ and $g \in \mathcal{C}^2$, it is easily shown that $u_{i+1} \in \mathcal{C}^2$. Also, we know from \cite{zhang:2004} and \cite{bouchardtouzi:2004} that $ Y_{t_{i+1}}^{\pi} = u_{i+1}(W_{t_{i+1}})$ is square integrable so that $u_{i+1}$ is square integrable (with respect to the Gaussian density). 

In light of Theorem~\ref{thm:adjust}, we can limit ourselves to the case where 
\begin{equation*} 
u_{i+1} \left( -\frac{l}{2} \right) 
= u_{i+1} \left( \frac{l}{2} \right) \text{ and } \Done{u_{i+1}}{x}\left( -\frac{l}{2} \right) = \Done{u_{i+1}}{x}\left( \frac{l}{2} \right) 
\end{equation*} so that $\alpha =\beta=\kappa=0$. 
Let $\{c_k\}_{k=-\infty}^{\infty}$ be the Fourier coefficients of $u_{i+1}$ on $\left[ -\frac{l}{2},\frac{l}{2}\right]$. We have that
\begin{eqnarray*}
\tilde{u}_{i}(x_k) & = & \int_{ \norm{y-x_k} \leq \frac{l}{2} } u_{i+1}(y) h(y-x_k)dy + \int_{ \norm{y-x_k} > \frac{l}{2} } u_{i+1}(y) h(y-x_k)dy \\
& = & \int_{ \norm{y}  \leq \frac{l}{2} } u_{i+1}(x_k + y) h(y)dy + \int_{ \norm{y} > \frac{l}{2} } u_{i+1}( x_k+y) h(y)dy 
\end{eqnarray*} 
where \begin{eqnarray*} \int_{ \norm{y} > \frac{l}{2} } u_{i+1}( x_k+y) h(y)dy & = & \CEsp{}{u_{i+1}(x_k + \Delta W_{n-1}){\bf 1}_{\mathbb{R} \backslash [-\frac{l}{2},\frac{l}{2}] } (\Delta W_{n-1} )}{} \\
& = & \mathcal{O}\left(e^{-K {\Delta_i}^{-1} l^2}\right) \end{eqnarray*} for some constant $K>0$ by successively applying Cauchy-Schwartz and Chernoff inequalities since the solution $u_{i+1}$ is square integrable. Hence 
\begin{eqnarray} \tilde{u}_{i}(x_k) & = & \int_{ \norm{y}  \leq \frac{l}{2} } T_{\infty}(x_k+y) h(y)dy  + \mathcal{O}\left(e^{-K {\Delta_i}^{-1} l^2}\right) \nonumber \\
& & +  \int_{ \norm{y}  \leq \frac{l}{2} } \left(u_{i+1}(x_k + y) - T_{\infty}(x_k+y) \right) h(y)dy   \label{eq:errorinter} \end{eqnarray} where $T_{\infty}(x) = \sum_{k= -\infty }^{\infty} c_ke^{ {\bf i} k \frac{2\pi}{l} x }$ for $x \in \mathbb{R}$. So that, on one hand, we have 
\begin{eqnarray} 
\lefteqn{ \int_{ \norm{y}  \leq \frac{l}{2} } T_{\infty}(x_k+y) h(y)dy }\nonumber  \\ 
& = & \sum_{j = -\frac{N}{2}}^{\frac{N}{2}-1} c_{j} e^{{\bf i} j \frac{2\pi}{l} x_k} \phi\left(j \frac{2\pi}{l}\right) - \int_{ \norm{y} >\frac{l}{2} } T_{\infty}(x_k+y) h(y)dy +   \mathcal{O}(\Delta x) \nonumber \\
& & \text{(by Lemma \ref{lem:fourier}),} \nonumber \\  
& = & \sum_{j = -\frac{N}{2}}^{\frac{N}{2} - 1} \phi\left(j \frac{2\pi}{l}\right) c_{j} e^{{\bf i} j \frac{2\pi}{l} x_k}  +   \mathcal{O}(\Delta x) + \mathcal{O}\left(e^{-K\norm{\Delta_i}^{-1}l^2}\right) \nonumber \\
&   & \text{ (by boundedness of $T_{\infty}$ and Chernoff inequality), } \nonumber \\
& = & (-1)^k \sum_{j=0}^{N-1} \phi( \nu_j ) \mathcal{D}\left[ \{ (-1)^s u_{i+1} (x_s)\}_{s=0}^{N-1} \right]_j  e^{{\bf i} \frac{2\pi}{N} jk}  \nonumber \\
&  & + \text{ } \mathcal{O}(\Delta x) + \mathcal{O}\left(e^{-K {\Delta_i}^{-1}l^2}\right) \text{ (by Lemma  \ref{lem:fourier}),} \nonumber \\
& = & \tilde{\bf{u}}_{ik} + \mathcal{O}(\Delta x) + \mathcal{O}\left(e^{-K {\Delta_i}^{-1}l^2} \right)  \label{eq:prfloc1} .
\end{eqnarray} 
Assuming $x_k \geq 0$, without loss of generality, define $\chi_0$ as 
\begin{eqnarray*} 
\chi_0(x_k) & = & \int_{ \norm{y}  \leq \frac{l}{2} } \left(u_{i+1}(x_k + y) - T_{\infty}(x_k+y) \right) h(y)dy \\
& = & \int_{ \frac{l}{2} - x_k }^{\frac{l}{2}}  \left(u_{i+1}(x_k + y) - u_{i+1}(x_k + y - l) \right) h(y)dy 
\end{eqnarray*} 
since $T_{\infty}$ is periodic and $T_{\infty}(x)=u_{i+1}(x)$ on the interval $\left[-\frac{l}{2},-\frac{l}{2}\right]$. 
Equation (\ref{eq:errorinter}) becomes 
\begin{equation} 
  \tilde{u}_{i}(x_k) = \tilde{\bf{u}}_{ik} + \chi_0(x_k) + \mathcal{O}(\Delta x) 
  + \mathcal{O}\left(e^{-K {\Delta_i}^{-1}l^2}\right) 
\end{equation} 
and we note, by the continuity of $u_{i+1}$, that
\begin{equation} 
  \norm{\chi_0(x_k)}  \leq C_0 \int_{ \frac{l}{2} - \norm{x_k} }^{\frac{l}{2}}h(y)dy 
  \label{eq:prfloc2} 
\end{equation} 
for some positive constant $C_0>0$. 

Similarly
 \begin{eqnarray*}
\dot{u}_{i}(x_k) %
& = & \frac{1}{\Delta_i} \int_{ \norm{y}  \leq \frac{l}{2} } u_{i+1}(x_k + y) yh(y)dy +  \frac{1}{\Delta_i} \int_{ \norm{y} > \frac{l}{2} } u_{i+1}( x_k+y) yh(y)dy 
\end{eqnarray*} 
where \begin{eqnarray*}  \lefteqn{\frac{1}{\Delta_i} \int_{ \norm{y} > \frac{l}{2} } u_{i+1}( x_k+y)y h(y)dy } \\ & = &  \frac{1}{\Delta_i}\CEsp{}{u_{i+1}(x_k + \Delta W_{n-1})\Delta W_{n-1} {\bf 1}_{\mathbb{R} \backslash [-\frac{l}{2},\frac{l}{2}] } (\Delta W_{n-1} )}{} \\
& \leq &  \frac{K}{\Delta_i} \CEsp{}{ (\Delta W_{n-1})^2 {\bf 1}_{\mathbb{R} \backslash [-\frac{l}{2},\frac{l}{2}] } (\Delta W_{n-1} )}{}^{\frac{1}{2}} \\
&   & \text{ (by Cauchy-Schwartz inequality), } \\
& = & \mathcal{O}\left({\Delta_i}^{-\frac{1}{2}}e^{-K {\Delta_i}^{-1} l^2}\right) \\
& & \text{(by successively applying Cauchy-Schwartz and Chernoff inequalities),} \\
& = & \mathcal{O}\left(e^{-\frac{1}{2}K {\Delta_i}^{-1} l^2}\right). \end{eqnarray*}  Hence 
\begin{eqnarray} \dot{u}_{i}(x_k) & = & \frac{1}{\Delta_i} \int_{ \norm{y}  \leq \frac{l}{2} } T_{\infty}(x_k+y) yh(y)dy  + \mathcal{O}\left(e^{-K {\Delta_i}^{-1} l^2}\right) \nonumber \\
& & +  \frac{1}{\Delta_i} \int_{ \norm{y}  \leq \frac{l}{2} } \left(u_{i+1}(x_k + y) - T_{\infty}(x_k+y) \right) yh(y)dy.  \label{eq:errorinter2} \end{eqnarray} 
Letting $c^{\prime}_j = {\bf i}j \frac{2\pi}{l} c_j$, we have 
\begin{eqnarray} \lefteqn{ \frac{1}{\Delta_i} \int_{ \norm{y}  \leq \frac{l}{2} } T_{\infty}(x_k+y)y h(y)dy }\nonumber  \\ 
& = & \sum_{j = -\frac{N}{2}}^{\frac{N}{2}-1} {c^{\prime}_{j}} e^{{\bf i} j \frac{2\pi}{l} x_k} \phi\left(j \frac{2\pi}{l}\right) - \frac{1}{\Delta_i}\int_{ \norm{y} >\frac{l}{2} } T_{\infty}(x_k+y)y h(y)dy +   \mathcal{O}(\Delta x) \nonumber \\
& & \text{(by Lemma \ref{lem:fourier} since $x\phi(x)$ is bounded),} \nonumber \\  
& = & \sum_{j = -\frac{N}{2}}^{\frac{N}{2} - 1} \phi\left(j \frac{2\pi}{l}\right) {c^{\prime}_{j}} e^{{\bf i} j \frac{2\pi}{l} x_k}  +   \mathcal{O}(\Delta x) + \mathcal{O}\left(e^{-K{\Delta_i}^{-1}l^2}\right) \nonumber \\
&   & \text{ (by boundedness of $T_{\infty}$ and Chernoff inequality), } \nonumber \\
& = & (-1)^k \sum_{j=0}^{N-1} {\bf i} \nu_j \phi( \nu_j ) \mathcal{D}\left[ \{ (-1)^s u_{i+1} (x_s)\}_{s=0}^{N-1} \right]_j  e^{{\bf i} \frac{2\pi}{N} jk}  \nonumber \\
&  & + \text{ } \mathcal{O}(\Delta x) + \mathcal{O}\left(e^{-K {\Delta_i}^{-1}l^2}\right) \text{ (by Lemma  \ref{lem:fourier}),} \nonumber \\
& = & \dot{\bf{u}}_{ik} + \mathcal{O}(\Delta x) + \mathcal{O}\left(e^{-K {\Delta_i}^{-1}l^2} \right)  \label{eq:prfloc12} .
\end{eqnarray} 
By equations (\ref{eq:errorinter2}) and (\ref{eq:prfloc12})
\begin{equation} \dot{u}_{i}(x_k) = \dot{\bf{u}}_{ik} + \chi_1(x_k) + \mathcal{O}\left(\Delta x \right) + \mathcal{O}\left( e^{-K {\Delta_i}^{-1}l^2}\right) \label{eq:prfloc3} 
\end{equation} 
where $K>0$ and, letting $\upsilon(y) = u_{i+1}(x_k + y) - T_{\infty}(x_k + y)$,
\begin{eqnarray*} 
{\chi_1(x_k)}  &=&   \Delta^{-1}_i  \int_{ \norm{y}  \leq \frac{l}{2} } y \upsilon(y) h(y)dy \\
&=&   \Delta^{-1}_i  \int_{ \norm{y}  \leq \frac{l}{2} } y^2 \frac{\upsilon(y) - \upsilon(0)}{y} h(y)dy \\
&=&  \Delta^{-1}_i  \int_{ \norm{y}  \leq \frac{l}{2} } y^2 \left( \Done{\upsilon}{x}(y) + \Dtone{\upsilon}{x}(\xi) y \right) h(y)dy   
\end{eqnarray*}
for some $\xi \in \left[ -\frac{l}{2}, \frac{l}{2} \right]$. Since $\Done{T_{\infty}}{x}$ is the Fourier expansion of $\Done{u_{i+1}}{x}$ and $u,v \in \mathcal{C}^2$ , we have
\begin{eqnarray} 
\norm{\chi_1(x_k)}  & \leq &  C_1 \Delta_i^{-1} \left(   \int_{ \frac{l}{2} - \norm{x_k} }^{\frac{l}{2}}y^2h(y)dy + \int_{\norm{y}  \leq \frac{l}{2}} \norm{y}^3 h(y)dy    \right)\nonumber \\
 & \leq & C_1 \left(  \left( \int_{ \frac{l}{2} - \norm{x_k} }^{\frac{l}{2}}h(y)dy \right)^{\frac{1}{2}} +  \Delta^{\frac{1}{2}} \right)\label{eq:prfloc4} 
\end{eqnarray}
by the Cauchy-Schwartz inequality, for some constant $C_1>0$.

The Lipschitz property of the driver $f$ completes the proof from the relations in equations (\ref{eq:prfloc1}), (\ref{eq:prfloc2}), (\ref{eq:prfloc3}) and (\ref{eq:prfloc4}).
\end{proof}

Theorem \ref{thm:LDE} decomposes the spatial discretization error in three parts: the truncation error, the discretization error and the extrapolation error. Most PDE based and spatial discretization based methods for BSDEs fail in giving a bound for the error due to truncation. The error analysis shows that the truncation error $\mathcal{O}(e^{-Kl^2})$ has a spectral convergence of index 2 when applying the convolution method. Also, the discretization error $\mathcal{O}\left( \Delta x \right)$, of first order, is similar to PDE based methods such as \cite{douglas:1996} or  \cite{milstein:2007}.

The extrapolation error $\chi$ is specific to the convolution method implemented using the DFT.  Equation (\ref{eq:extraperr}) shows that errors appear and may accumulate around the boundaries of the truncated domain. Nonetheless, the extrapolation error is mainly time related through the density $h$ and can be confined at the boundaries for fine time discretizations as shown in the following corollary. 

\begin{cor}
\label{cor:linext}
Under the conditions of Theorem \ref{thm:LDE}, \begin{equation} \lim_{ \norm{\pi} \rightarrow 0 } \chi(x_k) = 0 \end{equation}
for any $x_k \in \left[-\frac{l}{2}, \frac{l}{2} \right]$.
\end{cor}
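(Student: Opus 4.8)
The plan is to read the conclusion straight off the bound~(\ref{eq:extraperr}) established in Theorem~\ref{thm:LDE}, using only a Gaussian tail estimate together with the fact that $\norm{\pi}\to 0$ forces every time step $\Delta_i$ to vanish. Fix $x_k\in\left(-\frac{l}{2},\frac{l}{2}\right)$ and set $d:=\frac{l}{2}-\norm{x_k}$, which is \emph{strictly} positive precisely because the endpoints have been excluded. By~(\ref{eq:extraperr}) it suffices to show that $\int_{d}^{l/2}h(y)\,dy\to 0$ as $\norm{\pi}\to 0$, where $h$ is the Brownian transition density over the step containing $t_i$, namely $h(y)=(2\pi\Delta_i)^{-1/2}\exp\!\left(-y^2/(2\Delta_i)\right)$.

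The key step is a tail bound that is uniform in the index $i$. First I would write $\int_{d}^{l/2}h(y)\,dy\le\int_{d}^{\infty}h(y)\,dy=\overline{\Phi}\!\left(d/\sqrt{\Delta_i}\right)$, where $\overline{\Phi}$ denotes the standard normal tail. Since $\overline{\Phi}$ is decreasing and $d/\sqrt{\Delta_i}\ge d/\sqrt{\norm{\pi}}$ because $\Delta_i\le\norm{\pi}$ and $d>0$, this is at most $\overline{\Phi}\!\left(d/\sqrt{\norm{\pi}}\right)$, a quantity that no longer depends on $i$. Applying the Chernoff inequality for the Gaussian tail, $\overline{\Phi}(t)\le\tfrac12 e^{-t^2/2}$ for $t\ge 0$ (the same estimate used repeatedly in the proof of Theorem~\ref{thm:LDE}), gives $\int_{d}^{l/2}h(y)\,dy\le\tfrac12\exp\!\left(-\tfrac{d^2}{2\norm{\pi}}\right)$.

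Substituting this into~(\ref{eq:extraperr}) yields $\norm{\chi(x_k)}\le \tfrac{C}{\sqrt2}\exp\!\left(-\tfrac{d^2}{4\norm{\pi}}\right)$, and since $d>0$ the right-hand side tends to $0$ as $\norm{\pi}\to 0$, which is the claim. I do not anticipate any genuine obstacle: the one point deserving a moment's care is the uniformity over $i$, so that it is ``$\norm{\pi}\to 0$'' rather than ``$\Delta_i\to 0$ for one fixed $i$'' that drives the limit, and this is dispatched by the monotonicity of the Gaussian tail in the variance together with $\Delta_i\le\norm{\pi}$. It is worth remarking that the hypothesis $x_k\neq\pm\frac{l}{2}$ is essential and transparent in this argument, being exactly what makes $d=\frac{l}{2}-\norm{x_k}$ positive; at the endpoints the tail mass $\int_{d}^{l/2}h$ does not vanish and the extrapolation error need not disappear.
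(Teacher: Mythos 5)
Your proof is correct, and it reaches the same conclusion as the paper's by a more quantitative route. The paper argues qualitatively: it treats $x_k=0$ separately (where the integral in (\ref{eq:extraperr}) is over a degenerate interval), and for $x_k\neq 0$ it interchanges the limit with the integral and replaces $h$ by a Dirac delta concentrated at the origin, concluding from $0\notin\left[\frac{l}{2}-\norm{x_k},\frac{l}{2}\right]$ that the limiting integral vanishes. You instead bound $\int_{d}^{l/2}h(y)\,dy$ by the Gaussian tail $\overline{\Phi}\left(d/\sqrt{\Delta_i}\right)$ and apply the Chernoff estimate, obtaining the explicit bound $\norm{\chi(x_k)}\le \tfrac{C}{\sqrt{2}}\exp\left(-d^{2}/(4\norm{\pi})\right)$ with $d=\frac{l}{2}-\norm{x_k}>0$. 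What your version buys is twofold: it avoids the paper's informal limit--integral interchange and appeal to the delta function, and it upgrades the purely qualitative statement $\chi(x_k)\to 0$ to an exponential rate in $\norm{\pi}$ that degrades transparently as $x_k$ approaches the boundary (i.e.\ as $d\to 0$). Your explicit handling of uniformity over $i$ via $\Delta_i\le\norm{\pi}$ and monotonicity of the Gaussian tail is a point the paper glosses over when it silently passes from $\norm{\pi}\to 0$ to $\Delta_i\to 0$. Both arguments rest on the same underlying fact --- concentration of the transition density at the origin as the time step shrinks --- so there is no disagreement in substance, only in rigor and in the strength of the conclusion.
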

\begin{proof}
If $x_k = 0$ then equation (\ref{eq:extraperr}) gives $ \norm{ \chi(0) } \leq C \norm{\pi} ^{\frac{1}{2}}$ and the result holds.
If $x_k \neq 0$ and $x_k \in \left[-\frac{l}{2}, \frac{l}{2} \right]$, then
\begin{eqnarray*}  
\lim_{ \norm{\pi} \rightarrow 0 } \left( \int_{ \frac{l}{2} - \norm{x_k} }^{\frac{l}{2}}h(y)dy \right)^{\frac{1}{2}} & = & \left( \lim_{ \norm{\pi} \rightarrow 0 }\int_{ \frac{l}{2} - \norm{x_k} }^{\frac{l}{2}}h(y)dy \right)^{\frac{1}{2}} \\
& = & \left( \lim_{ \Delta_i \rightarrow 0 }\int_{ \frac{l}{2} - \norm{x_k} }^{\frac{l}{2}}h(y)dy \right)^{\frac{1}{2}} \\
& = & \left( \int_{ \frac{l}{2} - \norm{x_k} }^{\frac{l}{2}} \delta(y) dy \right)^{\frac{1}{2}} \\
& & \text{ (where $\delta$ is the Dirac delta function), }\\
& = & 0.
\end{eqnarray*}
Equation (\ref{eq:extraperr}) then leads to the result.
\end{proof}

We define the global convergence error for each space node as
\begin{equation}  
E_{l,\Delta x} = \sup_{i, k} e_{i,k} + \sup_{i, k} \dot{e}_{i,k} \label{eq:globalDR}
  \end{equation}
  where \begin{equation} e_{ik} = \norm{u_{n-i}(x_k) - {u}_{n-i,k}}  \label{eq:globalDR1} \end{equation} and \begin{equation} \dot{e}_{ik} = \norm{\dot{u}_{n-i}(x_k) - {\dot{u}}_{n-i,k}} \label{eq:globalDR2} \end{equation}
  for $i=1,...,n$ with $e_{0,k}=\dot{e}_{0,k}=0$. The next theorem
describes the stability and convergence properties of the convolution
method.
\begin{thm}
\label{thm:GDR}Suppose the conditions of Theorem \ref{thm:LDE}
are satisfied and let $h_{\Delta}$ be the Gaussian density with zero mean and variance $\Delta$. If the discretization is such that \begin{equation} \sup_{ i }  \max\left(\frac{\Delta x}{\sqrt{2\pi \Delta_i}} , \frac{\Delta x}{ \pi \Delta_i} \right)  \leq 1    \label{eq:ccond} \end{equation}then
the convolution method is stable and the global discretization error
$E_{l,\Delta x}$ satisfies 
\begin{equation}
 E_{l,\Delta x} =\chi_{\norm{\pi} } + \mathcal{O}(\Delta x ) + \mathcal{O}\left(e^{-K \norm{\pi}^{-1} l^{2}}\right) \label{eq:GDR} 
 \end{equation}
where \begin{equation} {\chi_{\norm{\pi}}} \leq C \left(  \left( \int_{ 0 }^{\frac{l}{2}}h_{\norm{\pi}}(y)dy \right)^{\frac{1}{2}} +  \norm{\pi}^{\frac{1}{2}} \right) 
\end{equation}  for some constants $C,K >0$ depending on the driver $f$, the terminal function $g$ and  the terminal time $T$. Finally $\lim_{ \norm{\pi} \rightarrow 0 } \chi_{\norm{\pi} } = 0$.
\end{thm}

\begin{proof}
First note that from the definitions of equations (\ref{eq:localDE}) and (\ref{eq:globalDR1})
\begin{eqnarray}
e_{ik} & \leq & { E}_{n-i,k} + \norm{ \mathbf{{u}}_{n-i,k} - {u}_{n-i,k}}  \nonumber\\
& \leq & E_{n-i,k} + (1+\Delta_iK) \norm{ \mathbf{\tilde{u}}_{n-i,k} - \tilde{u}_{n-i,k}} 
+ \Delta_i K \norm{ \mathbf{\dot{u}}_{n-i,k} - {\dot{u}}_{n-i,k}} \label{eq:prf1}
\end{eqnarray}where $K>0$ is the Lipschitz constant of the driver $f$. Also, we
have that \begin{equation} \dot{e}_{ik} \leq {E}_{n-i,k} + \norm{ \mathbf{\dot{u}}_{n-i,k} - {\dot{u}}_{n-i,k}}. \label{eq:prf12} \end{equation}from
equations (\ref{eq:localDE}) and (\ref{eq:globalDR2}).

Furthermore, the construction of the convolution method gives \begin{eqnarray} 
\norm{ \mathbf{\tilde{u}}_{i,k} - \tilde{u}_{i,k}} & \leq & \norm{ \mathfrak{D}^{-1}\left[ \left\{ \phi(\nu_{j}) \mathfrak{D}[ \left \{(-1)^s \tilde{w}_s ( u_{i+1}(x_s) - u_{i+1,s}) \right \}_{s=0}^{N-1}  ]_j     \right\}_{j=0}^{N-1} \right]_{k} }  + { E}_{i,k} \nonumber\\
&  & \text{ (by Theorem \ref{thm:LDE} since the transform functions are given), }\nonumber\\
& \leq & \frac{1}{N} \left( \sum_{j=0}^{N-1} \phi(\nu_{j}) \right) \sup_k \norm{ u_{i+1}(x_{i,k}) - u_{i+1,k} }+ { E}_{i,k} \nonumber\\
&  & \text{ (using the matrix-vector representation of DFTs), }\nonumber\\
& \leq & \frac{1}{N} \left( \sum_{j=0}^{N-1} \phi(\nu_{j}) \right) \sup_k e_{n-i-1,k} + { E}_{i,k} \nonumber\\ 
& \leq & \frac{ (\Delta \nu)^{-1} }{N} \left( \int_{\mathbb{R}} \phi(x)dx \right) \sup_k e_{n-i-1,k} + { E}_{i,k} \nonumber\\
& = & \frac{ \Delta x}{(2\pi \Delta_i)^{\frac{1}{2}}} \sup_k e_{n-i-1,k} + { E}_{i,k}. \label{eq:prf2}
\end{eqnarray}Similarly, \begin{eqnarray} 
\norm{ \mathbf{\dot{u}}_{i,k} - \dot{u}_{i,k}} & \leq &  \norm{ \mathfrak{D}^{-1}\left[ \left\{ \nu_j \phi(\nu_{j}) \mathfrak{D}[ \left \{(-1)^s \tilde{w}_s ( u_{i+1}(x_s) - u_{i+1,s}) \right \}_{s=0}^{N-1}  ]_j     \right\}_{j=0}^{N-1} \right]_{k} }  + { E}_{i,k} \nonumber\\
&  & \text{ (by Theorem \ref{thm:LDE} since the transform functions are given), }\nonumber\\
& \leq & \frac{1}{N} \left( \sum_{j=0}^{N-1} \norm{\nu_{j}}\phi(\nu_{j}) \right) \sup_k e_{n-i-1,k} + { E}_{i,k} \nonumber\\
&  & \text{ (using the matrix representation of DFTs), }\nonumber\\
& \leq & \frac{(\Delta \nu)^{-1} }{N} \left( \int_{\mathbb{R}} \norm{x}\phi(x)dx \right) \sup_k e_{n-i-1,k} + { E}_{i,k} \nonumber\\
& = & \frac{ \Delta x}{\pi \Delta_i} \sup_k e_{n-i-1,k} + { E}_{i,k}. \label{eq:prf3}
\end{eqnarray}Then, combining the inequalities of equations (\ref{eq:prf1}), (\ref{eq:prf2}) and (\ref{eq:prf3})
leads to \begin{eqnarray} 
e_{i,k} & \leq & C_0 { E}_{i,k} + \left({1+ 2\Delta_iK} \right) \max\left(\frac{\Delta x}{\sqrt{2\pi \Delta_i}} , \frac{\Delta x}{ \pi \Delta_i} \right)  \sup_k e_{i-1,k} \nonumber\\
& \leq & C_0 \sup_{i,k}{ E}_{i,k} + \left({1+ 2\Delta_i K} \right)\max\left(\frac{\Delta x}{\sqrt{2\pi \Delta_i}} , \frac{\Delta x}{ \pi \Delta_i} \right) \sup_k e_{i-1,k} \nonumber
\end{eqnarray}where $C_0>0$ and $K>0$ is the Lipschitz constant of the driver
$f$. So that \begin{eqnarray} \sup_k e_{i,k} & \leq &  C_0 \sup_{i,k}{ E}_{i,k} + \left({1+ 2\Delta_iK} \right) \max\left(\frac{\Delta x}{\sqrt{2\pi \Delta_i}} , \frac{\Delta x}{ \pi \Delta_i} \right)  \sup_k e_{i-1,k} \nonumber\\
& \leq &  C_0 \sup_{i,k}{ E}_{i,k} + ({1+ 2\Delta_i K}) \zeta \sup_k e_{i-1,k}
\label{eq:convprf} \end{eqnarray}for some positive number $\zeta$ satisfying \begin{equation*} \sup_i \max\left(\frac{\Delta x}{\sqrt{2\pi \Delta_i}} , \frac{\Delta x}{ \pi \Delta_i} \right) \leq \zeta \leq 1. \end{equation*}

From the inequality of equation (\ref{eq:convprf}), Gronwall's Lemma
yields \begin{equation} \sup_k e_{i,k} \leq C_0 e^{2T K}   \sup_{i,k} { E}_{i,k} \label{eq:prf4} \end{equation}
for $i=0,1,...,n$ knowing that $e_{0,k}=0$. Hence, the convolution
method is stable for the approximate solution $u_{i}$ since its error
at any time step is absolutely bounded.

The inequalities of equations (\ref{eq:prf12}), (\ref{eq:prf3}) and (\ref{eq:prf4})
lead to \begin{eqnarray} \sup_k \dot{e}_{i,k} & \leq & \left( C_1 +  \frac{\Delta x}{\pi \Delta_i}  {C_0 e^{2T K}} \right) \sup_{i,k}E_{i,k} \nonumber\\
& \leq & \left( C_1 +  {C_0 e^{2T K}} \right) \sup_{i,k}E_{i,k} \label{eq:prf5}
\end{eqnarray}for a positive constant $C_{1}>0$. Hence, the convolution method
is also stable for the approximate gradient $\dot{u}_{i}$. 

The result of equation (\ref{eq:GDR}) follows by taking the supremum
on the left hand sides of equations (\ref{eq:prf4}) and (\ref{eq:prf5})
other time steps and using Corollary \ref{cor:linext}.
\end{proof}

\section{Extensions} \label{sec:extensions}

Various extensions of the convolution method can be made.  We  consider the convolution method under decoupled FBSDEs and reflected FBSDEs. These cases have interesting applications in mathematical finance, especially for option pricing.

\subsection{Forward-backward stochastic differential equations}

We can extend the convolution method to consider FBSDEs
\begin{equation}
\begin{cases} dX_t = a(t,X_t)dt + \sigma(t,X_t)dW_t \\
-dY_t = f(t,X_t,Y_t,Z_t)dt - Z_tdW_t \\ 
X_0 = x_0 \text{ , } Y_T = g(X_T) \end{cases}
\end{equation}
associated to the Cauchy problem for the advection-diffusion
equation 
\begin{equation} 
\begin{cases} 
\Done{u}{t} + a(t,x) \Done{u}{x} + \frac{1}{2} \sigma^2(t,x) \Dtone{u}{x} + f(t,x,u, \sigma(t,x) \Done{ u}{x}) = 0 \text{ , } (t,x) \in [0,T) \times \mathbb{R}  \\
u(T,x) = g(x) \text{, } x \in \mathbb{R} 
\end{cases}
\end{equation}
to which an obstacle can be added when in presence of a reflected FBSDE. When discretized with the Euler scheme, the FBSDE numerical solution is given by
\begin{equation} 
\begin{cases} 
{Z}^{\pi}_{t_n} = 0 \text{, } {Y}^{\pi}_{t_n} = \xi^{\pi} \text{, } {X}^{\pi}_0 = x_0 \\ 
{X}^{\pi}_{t_{i+1}} =  {X}^{\pi}_{t_i} + a(t_i, {X}^{\pi}_{t_i}) \Delta_i + \sigma(t_i, {X}^{\pi}_{t_i}) \Delta W_i \\ 
{Z}^{\pi}_{t_i} = \frac{1}{\Delta_i} \CEsp{}{ {Y}^{\pi}_{t_{i+1}} \Delta W_i | \mathcal{F}_{t_i} }{} \\
 {Y}^{\pi}_{t_i}= \CEsp{}{{Y}^{\pi}_{t_{i+1}} | \mathcal{F}_{t_i}}{}  +   f({t_i},{X}^{\pi}_{t_i}, \CEsp{}{{Y}^{\pi}_{t_{i+1}} | \mathcal{F}_{t_i}}{} ,{Z}^{\pi}_{t_i}) .
\end{cases} \label{eq:convschemeR11} 
\end{equation}
The approximate solutions satisfy\begin{eqnarray} u_i(x)  & = &  \tilde{u}_i(x) + \Delta_i f(t_i, x, \tilde{u}_i(x) , \dot{u}_i(x)) \label{eq:asfbsde}\\
\dot{ u}_i (x) & = & \frac{ \sigma^{-1}(t_i, x) }{\Delta_i  } \int_{\mathbb{R}}^{} (y-\Delta_ia(t_i,x)) u_{i+1}(x + y) h_i(y|x) dy \nonumber \\
 & = & e^{-\alpha x}\sigma(t_i,x) \mathfrak{F}^{-1}[ \mathfrak{F}[u^{\alpha}_{i+1}](\nu) (\alpha + {\bf i} \nu) \phi_i(\nu - {\bf i} \alpha,x) ](x) \label{eq:agfbsde}
\end{eqnarray} where \begin{eqnarray} \tilde{u}_i(x) & = & \int_{\mathbb{R}}^{} u_{i+1}(x+y) h_i(y|x) dy \nonumber \\
 & = & e^{-\alpha x} \mathfrak{F}^{-1}[ \mathfrak{F}[u^{\alpha}_{i+1}](\nu)  \phi_i(\nu- {\bf i} \alpha,x) ](x) \label{eq:isfbsde} 
\end{eqnarray}  for $i=0,1,...,n-1$ and $u_n(x) = g(x)$.
The density of increments is Gaussian
\begin{equation} 
h_i(y|x) = \frac{1}{( 2 \pi \Delta_i )^{\frac{1}{2}} \sigma} \exp \left( -\frac{ (y - a(t_i,x) \Delta_i)^2 }{2 \sigma^2(t_i,x) \Delta_i} \right) 
\end{equation}
with characteristic function 
\begin{equation} 
\phi_i(\nu,x) = \exp \left( {\Delta_i\left( {\bf i} a(t_i,x)  \nu - \frac{1}{2} \sigma^2(t_i,x) \nu^2 \right)} \right). 
\end{equation}
The development of the convolution method in this case also leads to transforms identical to equation (\ref{eq:Pint}).
In our implementation, the $\dot{u}$ and $\dot{v}$
are actually estimates for $\sigma(t,x)\Done{u}{x}$
but the scheme can easily be modified so as to estimate the derivative
$\Done{ u}{x}$ directly. 

The equivalence \begin{eqnarray} \theta (x) & = & \frac{1}{2\pi} \int_{-\infty}^{\infty} e^{ {\bf i} \nu x } \hat{\eta^{\alpha}}(\nu) \psi( \nu ) d\nu \nonumber \\ & = &\frac{1}{2\pi} \int_{-\infty}^{\infty} e^{ {\bf i} \nu x } \widehat{\eta^{\alpha}_{ \beta, \kappa}} (\nu) \psi( \nu ) d\nu - H(x,\alpha, \beta, \kappa) \end{eqnarray} 
of Theorem \ref{thm:adjust} still holds with
\begin{equation}   
H(x,\alpha, \beta, \kappa) =  e^{-\alpha x}( \beta (x + a(t_i,x) \Delta_i) + \kappa) 
\end{equation}  if $\psi(\nu) = \phi_i(\nu - {\bf i}\alpha,x)$ and 
\begin{equation}
 H(x,\alpha, \beta, \kappa) = e^{-\alpha x} \beta \sigma(t_i,x) 
\end{equation}
if $\psi(\nu) = \sigma(t_i,x) (\alpha + {\bf i} \nu) \phi_i(\nu - {\bf i}\alpha,x)$.
 Whenever the forward coefficients $a$ and $\sigma$ depend on the state variable $x$, a matrix multiplication in required to perform the DFTs. 
 
Also, the convolution method can be used to compute conditional expectations under general L\'evy processes as in \cite{lordal:2008} \revised{which focuses on American option pricing under L\'evy processes}. Indeed, the independence of increments and the availability of the characteristic function are the only requirements to apply the method.  The convolution method may also serve as a numerical method for partial differential integral equations (PIDE) under a L\'evy process \revised{similar to \cite{lordal:2008}}.

\subsection{Reflected FBSDEs}

Explicit Euler schemes have been constructed for reflected FBSDEs with continuous barrier which make it possible to apply the convolution method to such FBSDEs. Consider the solution
$(X,Y,Z,A)$ of the system 
\begin{equation} 
\begin{cases} dX_t = a(t,X_t)dt + \sigma(t,X_t)dW_t \\ -dY_t = f(t,X_t,Y_t,Z_t,)dt +  dA_t - Z_tdW_t \\ 
Y_t \geq B_t \text{ , } dA_t \geq 0  \text{ , } \forall t \in [0,T] \\
\int_0^T (Y_t - B_t)dA_t = 0  \text{ , } X_0 = x_0 \text{ , } Y_T = g(X_T) \end{cases}
\end{equation}
where the lower barrier is a deterministic function $B:[0,T]\times\mathbb{R}\rightarrow\mathbb{R}$
of time and the Brownian motion and 
\begin{equation} 
B_t = B(t,X_t). 
\end{equation}
This reflected FBSDE is associated to the following obstacle problem 
\begin{equation} 
\begin{cases} 
\Done{u}{t} + a(t,x) \Done{u}{x} + \frac{1}{2} \sigma^2(t,x) \Dtone{u}{x} + f(t,x,u, \sigma(t,x) \Done{ u}{x}) = 0 \text{ , } (t,x) \in [0,T) \times \mathbb{R}  \\
u(t,x) \geq B(t,x) \text{, } (t,x) \in [0,T] \times \mathbb{R}  \\
u(T,x) = g(x) \text{, } x \in \mathbb{R} 
\end{cases} 
\end{equation}
as established by \cite{elkarouial:1997}. An adaptation of the explicit Euler scheme I provides the numerical solution to the reflected BSDE
through the equations
\begin{equation} 
\begin{cases} 
{Z}^{\pi}_{t_n} = 0 \text{, } {Y}^{\pi}_{t_n} = \xi^{\pi} \text{, } {X}^{\pi}_0 = x_0 \\ 
{X}^{\pi}_{t_{i+1}} =  {X}^{\pi}_{t_i} + a(t_i, {X}^{\pi}_{t_i}) \Delta_i + \sigma(t_i, {X}^{\pi}_{t_i}) \Delta W_i \\ 
{Z}^{\pi}_{t_i} = \frac{1}{\Delta_i} \CEsp{}{ {Y}^{\pi}_{t_{i+1}} \Delta W_i | \mathcal{F}_{t_i} }{} \\
\Delta A^{\pi}_{t_i} = \left ( \CEsp{}{{Y}^{\pi}_{t_{i+1}}+f({t_i},{X}^{\pi}_{t_i},{Y}^{\pi}_{t_{i+1}},{Z}^{\pi}_{t_i})\Delta_i | \mathcal{F}_{t_i}}{} - B(t_i, W_{t_i})      \right)^{-} \\
 {Y}^{\pi}_{t_i}= \CEsp{}{{Y}^{\pi}_{t_{i+1}}+f({t_i},{X}^{\pi}_{t_i},{Y}^{\pi}_{t_{i+1}},{Z}^{\pi}_{t_i})\Delta_i | \mathcal{F}_{t_i}}{} + \Delta A^{\pi}_{t_i} 
\end{cases} \label{eq:convschemeR1} 
\end{equation}
where for any number $x \in \mathbb{R}$, $x^- = \max(0,-x)$.

 Time discretization of RBSDEs and their convergence were treated in \cite{bouchardchass:2008} for the implicit Euler scheme. \cite{pengxu:2011} proposed an equivalent scheme with a discrete filtration and proved its convergence under a binomial method. The scheme is easily solved with a convolution method by first noticing that the approximate solution $(v_{i}, \dot{v}_{i}, \Delta \bar{v}_{i})$ at mesh time $t_i$, where  $\Delta\bar{v}_{i}$ is the approximate reflection process,  can be written as
\begin{eqnarray} 
v_i(x) & = &  \int_{-\infty}^{\infty} \tilde{v}_{i+1}(x+y) h_i(y|x) dy  + \Delta \bar{v}_i(x) \nonumber\\
       & = & e^{-\alpha x} \mathfrak{F}^{-1}[ \mathfrak{F}[\tilde{v}^{\alpha}_{i+1}](\nu)  \phi_i(\nu- {\bf i} \alpha,x) ](x)  + \Delta \bar{v}_i(x) 
\end{eqnarray}
where 
\begin{eqnarray} \tilde{v}_{i+1}(x)  & = & v_{i+1}(x) + \Delta_i f(t_i, x, v_{i+1} (x), \dot{v}_i(x) ) \text{, } \\
\dot{v}_i(x) & = & \frac{\sigma^{-1}(t_i,x)}{\Delta_i} \int_{-\infty}^{\infty}(y-a(t_i,x)) v_{i+1}(x+y) h(y |x) dy \nonumber \\ 
& = &  e^{-\alpha x}\sigma(t_i,x) \mathfrak{F}^{-1}[ \mathfrak{F}[v^{\alpha}_{i+1}](\nu) (\alpha + {\bf i} \nu) \phi_i(\nu - {\bf i} \alpha,x) ](x)\\
\Delta \bar{v}_i(x) & = & \left( e^{-\alpha x} \mathfrak{F}^{-1}[ \mathfrak{F}[\tilde{v}^{\alpha}_{i+1}](\nu)  \phi_i(\nu- {\bf i} \alpha,x) ](x)  - B(t_i,x) \right)^{-} 
\end{eqnarray}
for $i=0,1,...,n-1$ and $v_{n}(x)=g(x)$.  The computation of $\dot{v}_{i}$ and the integral part of the approximate solution ${v}_{i}$ is identical to the non-reflected case.

One can also naturally build an alternative scheme from the explicit
Euler scheme II. The approximate solution $u_{i}$, the approximate gradient
$\dot{u}_{i}$ and the approximate reflection $\Delta \bar{u}_{i}$ at mesh
time  $t_{i}$ then take the form 
\begin{eqnarray}
 u_i(x)  & = &  \tilde{u}_i(x) + \Delta_i f(t_i, x,\tilde{u}_i(x) , \dot{ u}_i(x)) + \Delta \bar{u}_i (x) \label{eq:ufR} 
\end{eqnarray}
where
\begin{eqnarray} 
\dot{ u}_i (x) & = & \frac{ \sigma^{-1}(t_i, x) }{\Delta_i  } \int_{\mathbb{R}}^{} (y-\Delta_ia(t_i,x)) u_{i+1}(x + y) h_i(y|x) dy \nonumber \\
 & = & e^{-\alpha x}\sigma(t_i,x) \mathfrak{F}^{-1}[ \mathfrak{F}[u^{\alpha}_{i+1}](\nu) (\alpha + {\bf i} \nu) \phi_i(\nu - {\bf i} \alpha,x) ](x) \label{eq:gufR} \\
\tilde{u}_i(x) & = & \int_{\mathbb{R}}^{} u_{i+1}(x+y) h_i(y|x) dy \nonumber \\
 & = & e^{-\alpha x} \mathfrak{F}^{-1}[ \mathfrak{F}[u^{\alpha}_{i+1}](\nu)  \phi_i(\nu- {\bf i} \alpha,x) ](x)\\
 \Delta \bar{u}_i(x) & = & \left( \tilde{u}_i(x) + \Delta_i f(t_i,x, \tilde{u}_i(x) , \bar{ u}_i(x)) - B(t_i,x) \right)^{-}
\end{eqnarray}
for $i=0,1,...,n-1$ and $u_n(x) = g(x)$.

\section{Application to option pricing}  \label{sec:numres}
We shall consider the case of BSDEs with non-linear and non-smooth drivers that present the lowest rate of convergence through option pricing problems.  An introduction to financial applications of BSDEs, particularly
to imperfect markets and American option problems, can be found in \cite{elkarouiq:1997}, \cite{elkarouietal:1997} or \cite{elkaouri:1996}.

The market model consists of a single risky asset (or stock)
${S}$ with the dynamics \begin{equation} S_t = e^{X_t} \end{equation}
where the process ${X}$ represents the stock return. We first consider
a European call option with maturity $T$ and strike price $K$
under a lending rate of $r$ and a borrowing rate $R$. The return
process is an arithmetic Brownian motion \begin{equation} X_t = X_0 + \left(\mu - \delta - \frac{1}{2} \sigma^2\right) t + \sigma W_t \end{equation}
such that the stock has an initial value of $S_{0}=e^{X_{0}}$,
an expected return rate of $\mu$, a dividend rate of $\delta$ and a volatility of $\sigma$. 

The European call option price follows a BSDE with the return process
${X}$ as the forward process, the driver \begin{equation} f(t,y,z) = -ry - \left( \frac{ \mu - r}{\sigma} \right)z + (R-r)\left( y - \frac{z}{\sigma}\right)^{-} \end{equation}and
the terminal function \begin{equation} g(x) = \left( e^x - K \right)^{+} \end{equation}
under the given imperfect market conditions. The American call option solves a reflected BSDE with the
barrier function \begin{equation} B(t,x) = g(x) = (e^x - K)^+ \text{ , }  (t,x) \in [0,T]\times \mathbb{R}. \end{equation}

\subsection{Numerical Results}
Suppose that $T=1$, $S_{0}=100$, $r=0.01$,  $\mu=0.05$, and $\sigma=0.20$.
When the borrowing rate equals the lending rate $R=r=0.01$ and $\delta=0$, the European and the American call options have the same price. Figure \ref{fig:Error} shows the structure of the absolute log error on European option prices and deltas where the true values are computed using the Black-Scholes formula. As expected errors are amplified at the boundaries of the truncated domain.   Errors  are also amplified for around-the-money options,  to a lesser extent, due to the non-smoothness of the terminal function $g$.   In addition, out-of-the-money options have smaller absolute errors compared to in-the-money options.

\begin{figure}
\begin{center}
\caption{Absolute error of the convolution method (scheme II) for \revised{European} call option prices and deltas (non-dividend paying stock and no market frictions)} 
\resizebox{\textwidth}{!}
{%
\includegraphics[bb=0 0 4667 3501]{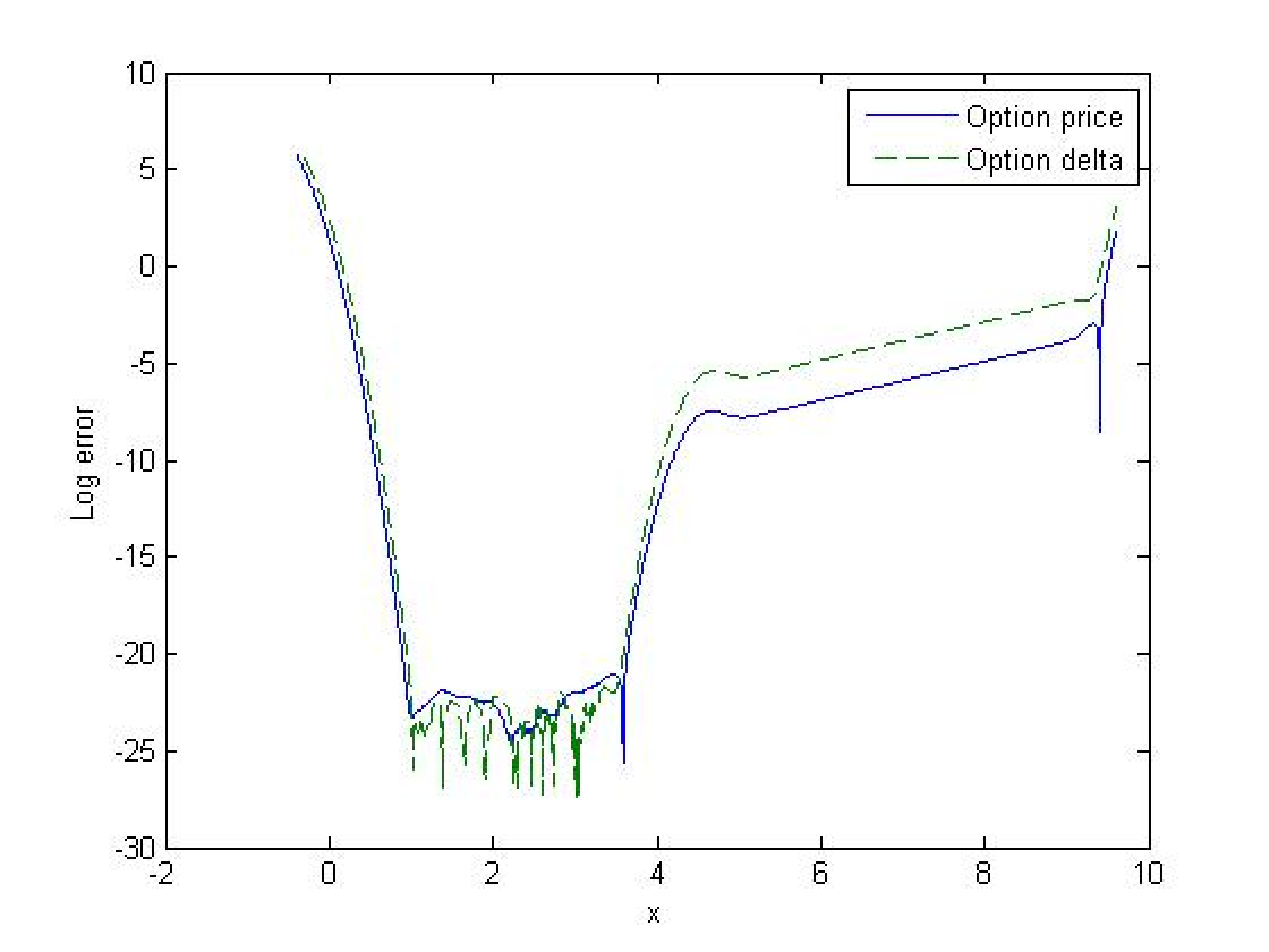}
}
\label{fig:Error}
{ $[x_0,x_N] = X_0 + [-5,5]$, $N=2^{12}$, $n=1000$, $\epsilon = 5$, $K = S_0 = 100$, $R=r=0.01$, $\delta = 0$.}
\end{center}
\end{figure}

Table~\ref{tab:callpm} gives the 
relative percentage error for price estimates of the \revised{European} call option prices using both convolution
schemes I and II  for different time steps and
the indicated strike prices. Table \ref{tab:calldm} gives the
estimates for the \revised{European} call option deltas obtained from the approximate gradient
by 
\begin{equation}  
\text{Delta} = \frac{ \dot{u}_0(X_0) }{ \sigma S_0 }  
\end{equation}
when using the explicit Euler scheme II. The option prices and option deltas are calculated on the restricted domain $[x_0,x_N] = X_0 + [-5,5]$ with $N = 2^{12}$ grid points and a minimal slope of $\epsilon = 5$ (see Remark~\ref{rem:slope}). The Black-Scholes formula gives call option prices of $4.6101$, $8.4333$ and $14.1929$
at strike prices $K=110$, $100$ and $90$ respectively when the other variables are kept unchanged. The true
values for the option deltas are $0.3720$, $0.5596$ and $0.7507$
when the strike price is $K=110,100$ and $90$ respectively.

The results of Table \ref{tab:callpm} and \ref{tab:calldm} show the accuracy of the convolution method on a RBSDE with a smooth linear driver. Indeed, the relative error percentages remain low (less than $0.3\%$) for the estimated option prices and deltas.  Out-of-the-money option estimates seem to display the largest relative errors.   

\begin{table}
\begin{center} 
\caption{Relative errors (\%) for \revised{European} call option prices (non-dividend paying stock and no market frictions)} 
\scalebox{1.0}{
\begin{tabular}{l c c c c c} 
\\
\hline
 & K (Strike) & n=500 & n=1000 & n=2000 & n=5000 \\ \hline \hline
 \multirow{3}{*}{ Convolution (Scheme I) } & 110 & 0.0456 & 0.0217 & 0.0108 & 0.0043 \\
                                         & 100 & 0.0178 &   0.0095 & 0.0047 & 0.0024 \\ 
                                         & 90 & 0.0049 & 0.0028 & 0.0014 & 0.0007 \\ \hline 
\multirow{3}{*}{ Convolution (Scheme II) } & 110 & 0.0087 & 0.0239 & 0.0022 & 0.0001 \\
                                         & 100 & 0.0059 & 0.0024 & 0.0012 & 0.0007 \\
                                         & 90  & 0.0028 & 0.0014 & 0.0007 & 0.0004 \\   \hline   \\
\end{tabular} 
}
\label{tab:callpm} 

{ $[x_0,x_N] = X_0 + [-5,5]$, $N = 2^{12}$, $\epsilon = 5$, $R=r=0.01$, $\delta=0$.}
\end{center} 
\end{table}

\begin{table}
\begin{center} 
\caption{Relative errors (\%) for \revised{European} call option deltas (non-dividend paying stock and no market frictions)}
\scalebox{1.0}{ 
\begin{tabular}{l c c c c} 
\\
\hline
K (Strike) & 90 & 100 & 110\\ \hline \hline
Convolution (Scheme I) & 0.0133 &  0.0010 & 0.2414 \\
Convolution (Scheme II) & 0.0133 & 0.0010 & 0.2414 \\       
\hline \\
\end{tabular} }
\label{tab:calldm} 

{$[x_0,x_N] = X_0 + [-5,5]$, $N = 2^{12}$, $\epsilon = 5$, $R=r=0.01$, $\delta=0$.}
\end{center} 
\end{table}

\revised{When the borrowing and lending rates differ, in the absence of dividends, the European and American call option prices agree and we may employ the Black-Scholes formula with the higher borrowing rate to price the call option (see \cite{gobetLecNotes,MR1233619,korn95,bergman95}).
Nevertheless, we employ the convolution method without these simplifications in order to test the method.} For a borrowing rate of $R=0.03$ and a dividend rate of $\delta=0$, Table \ref{tab:callpim} shows
the estimates for the European call prices when the option is at the
money $S_0=K=100$ and $r=0.01$. \revised{The values in Table \ref{tab:callpim} are clearly converging to the true value, 9.4134, calculated using the Black-Scholes formula with borrowing rate $R$.}  Moreover, the convolution methods return an option delta of $0.5987$, when applied with $n=2000$ time steps\revised{, which agrees with the delta calculated from the Black-Scholes formula with the higher borrowing rate}. \revised{Note that the simulated paths  in Figure \ref{fig:simAm}} never hit the \revised{early exercise} barrier \revised{as expected}.  Paths are simulated using the solution from the convolution method applied on scheme II on the restricted domain $[x_0,x_N] = X_0 + [-5,5]$ with $N=2^{12}$ grid points, $n=1000$ time steps and $\epsilon = 5$. We used $n=1000$ time steps to simulate the stock price ($S_t$).

\begin{table}
\begin{center} 
\caption{ATM \revised{European} call option prices  (non-dividend paying stock under imperfect market conditions)} 
\scalebox{1.0}{
\begin{tabular}{c c c c c} 
\\
\hline
n (number of time steps) & 500 & 1000 & 2000 & 5000 \\
Convolution (Scheme I) & 9.4127 &   9.4131 & 9.4132 & 9.4133 \\
Convolution (Scheme II) & 9.4132 & 9.4133 & 9.4133 & 9.4134 \\  
\hline \\
\end{tabular} 
}
\label{tab:callpim}

{ $[x_0,x_N] = X_0 + [-5,5]$, $N=2^{12}$, $\epsilon = 5$, $R=0.03$, $r=0.01$, $K=S_0=100$, $\delta=0$.}

\end{center} 
\end{table}

\begin{figure}
\begin{center}
\caption{(ATM) \revised{European} call option sample paths  (non-dividend paying stock under imperfect market conditions)} 
\resizebox{\textwidth}{!}{
\includegraphics[bb=0 0 4667 3501,clip]{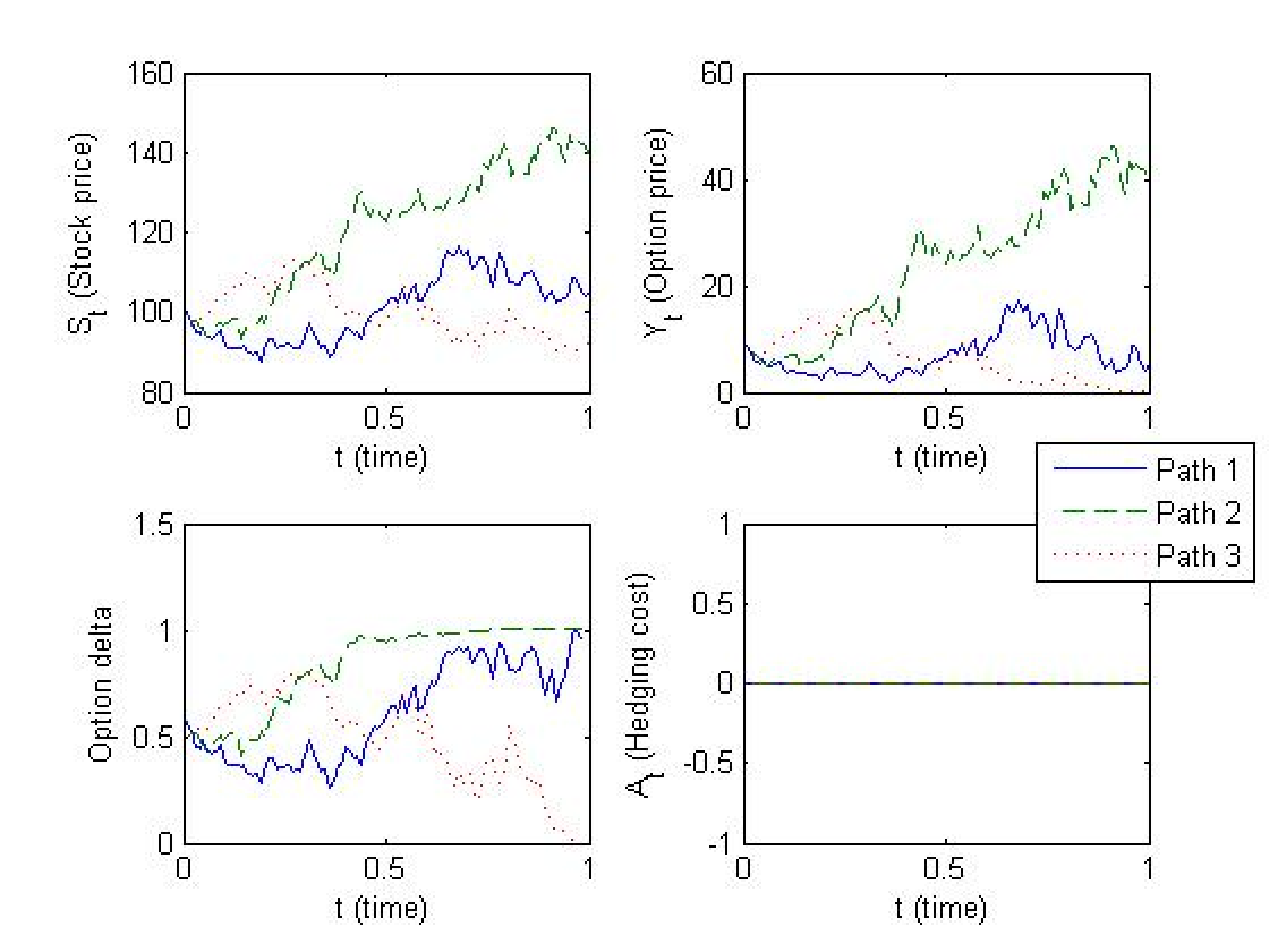}
}

\label{fig:simAm}

{$[x_0,x_N] = X_0 + [-5,5]$, $N=2^{12}$, $n=1000$, $\epsilon = 5$, $K = S_0 = 100$, $R=0.03$, $r=0.01$, $\delta = 0$.}

\end{center}
\end{figure}

The option price can be calculated using a Monte-Carlo method such as
the forward scheme of \cite{benderdenk:2007}.  However,
in the context of uni-dimensional BSDEs, Monte-Carlo methods will generally
be less efficient than space discretization methods. As an illustration,
the convolution method on both explicit Euler schemes runs in approximately
$4$ seconds when pricing the options of Table \ref{tab:callpim}
with $n=1000$ time steps. On the other hand, the forward scheme runs in $18$
seconds with only $n=20$ time steps. We used the $7$ first power functions as basis functions and $100,000$ paths to
generate the Monte-Carlo estimates. The Picard iterations are stopped whenever
the difference in two consecutive prices is less than $10^{-4}$ for
a maximum number of $10$ iterations. Fifty $(50)$ independent valuations with the Monte-Carlo method
give a $95\%$ confidence interval of $[9.3972,9.4222]$ which includes
all estimates of Table \ref{tab:callpim}. Hence, the convolution method gives satisfactory results even for coarse time discretization. 

Table \ref{tab:callpim2} provides price estimates for out-of-the-money and in-the-money options
and Table \ref{tab:calldm2} gives estimates for option deltas on a non-dividend-paying stock under imperfect market conditions. 
Both tables compare the estimates obtained with the convolution method \revised{to} those obtained with the binomial method of \cite{pengxu:2011} \revised{and the true values calculated using the Black-Scholes formula with the higher borrowing rate $R$}. The convolution method and the binomial method give similar prices and delta values for all options which confirms the convolution method accuracy even for non-smooth drivers. Nonetheless, the binomial method is faster (less that tenth of a second for $1000$ time steps) than the convolution method for the same number of time step when computing the BSDE initial values. However, simulation of BSDEs is easier under the convolution method where a simple interpolation using Brownian paths can be performed. Under the binomial method, Brownian paths must be approximated by scaled random walks.

\begin{table}
\begin{center} 
\caption{\revised{European} call option prices (non-dividend paying stock under imperfect market conditions)} 
\scalebox{1.0}{
\begin{tabular}{l c c c c c } 
\\
\hline
n (number of time steps) & K (Strike) & 500 & 1000 & 2000 & 5000  \\ \hline \hline
\multirow{2}{*}{ Convolution (Scheme I) } & 110 & 5.2924 & 5.2929 & 5.2931 & 5.2933 \\
                                         & 90 & 15.4289 & 15.4291 & 15.4292 & 15.4292 \\ \hline 
\multirow{2}{*}{ Convolution (Scheme II) } & 110 & 5.2932 & 5.2933 & 5.2933 & 5.2934 \\
                                          & 90  & 15.4290 & 15.4291 & 15.4291 & 15.4292 \\ \hline
\multirow{2}{*}{ Binomial Method } & 110 & 5.2918 & 5.2945 & 5.2936 & 5.2937 \\
                                          & 90  & 15.4313 & 15.4263 & 15.4298 & 15.4292 \\ \hline 
\multirow{2}{*}{ \revised{ Black-Scholes (exact)} } & \revised{110} & \multicolumn{4}{c} {\revised{5.2933}} \\ 
 & \revised{90} & \multicolumn{4}{c} {\revised{15.4292}} \\ \hline
\end{tabular} }
\label{tab:callpim2} 

{ $[x_0,x_N] = X_0 + [-5,5]$, $N=2^{12}$, $\epsilon = 5$, $R=0.03$, $r=0.01$, $\delta=0$.}

\end{center} 
\end{table}

\begin{table}
\begin{center} 
\caption{\revised{European} call option deltas (non-dividend paying stock under imperfect market conditions)} 
\scalebox{1.0}{
\begin{tabular}{l c c c c} 
\\
\hline
K (Strike) & 90 & 100 & 110\\ \hline \hline
Convolution (Scheme I) & 0.7814 &   0.5987 & 0.4104 \\
Convolution (Scheme II) & 0.7814 &  0.5987 & 0.4104 \\ 
Binomial Method       & 0.7813 & 0.5987  & 0.4104 \\      
\revised{Black-Scholes (exact)}         & \revised{0.7814} & \revised{0.5987} & \revised{0.4104} \\ 
\hline \\
\end{tabular} }
\label{tab:calldm2} 

{ $[x_0,x_N] = X_0 + [-5,5]$, $N = 2^{12}$, $\epsilon = 5$, $n=2000$, $R=0.03$, $r=0.01$, $\delta = 0$.}

\end{center} 
\end{table}

\begin{figure}
\begin{center}
\caption{(ATM) American call option sample paths (dividend-paying stock under imperfect market conditions)} 
\resizebox{\textwidth}{!}{
\includegraphics[bb=0 0 4667 3501,clip]{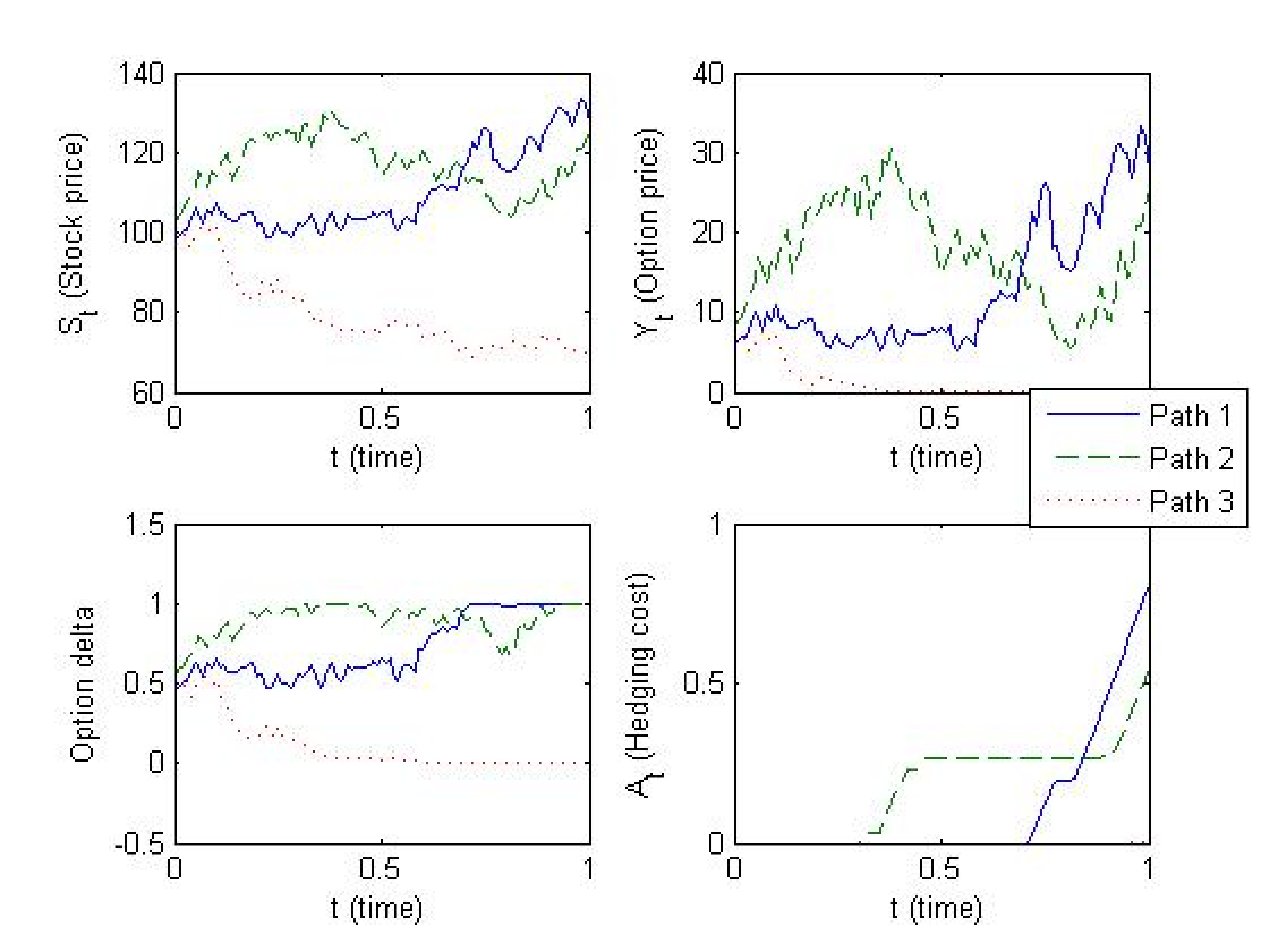} 
}
\label{fig:simAm2}

{ $[x_0,x_N] = X_0 + [-5,5]$, $N=2^{12}$, $n=1000$, $\epsilon = 5$, $K = S_0 = 100$, $R=0.03$, $r=0.01$, $\delta = 0.035$.}

\end{center}

\end{figure}

\begin{figure}

\begin{center}
\caption{(ATM) American call option price and delta surfaces \newline (dividend paying stock with market frictions)} 
\resizebox{\textwidth}{!}{
\includegraphics[bb=0 0 4667 3501,clip]{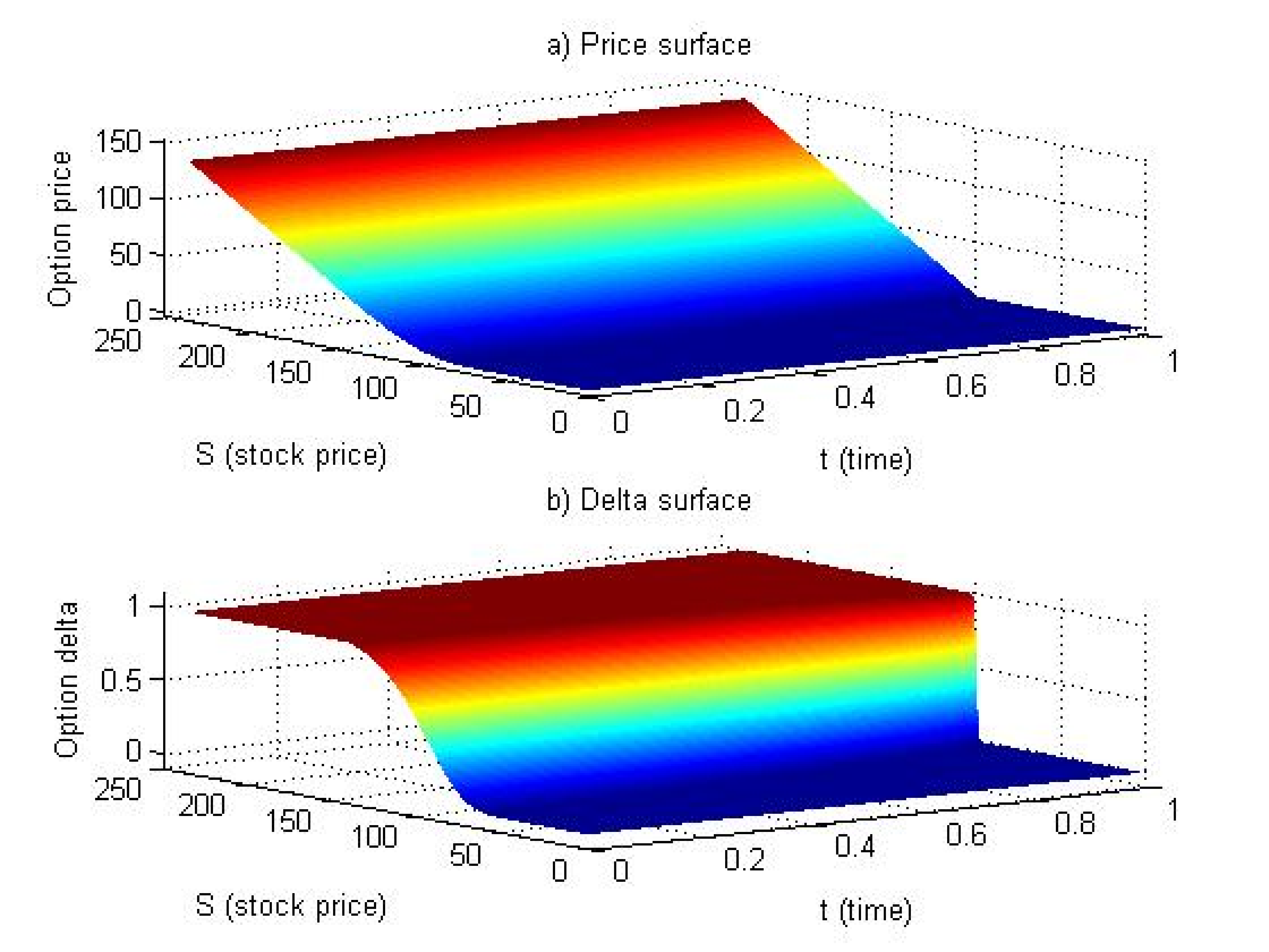} 
}
\label{fig:solsimAm}

{$[x_0,x_N] = X_0 + [-5,5]$, $N=2^{12}$, $n=1000$, $\epsilon = 5$, $K=S_0 = 100$, $R=0.03$, $r=0.01$, $\delta=0.035$.}

\end{center}
\end{figure}

If we introduce a dividend rate of $\delta=0.035$ under imperfect market conditions ($R=0.03$ and $r=0.01$), then the American and the European call option prices differ and the Black-Scholes formula does not apply. 
The convolution method estimates the (at-the-money) American call option price at $7.5610$
and the European call option price at $7.4712$. We use scheme II with the restricted domain $[x_0,x_N] = X_0 + [-5,5]$,
$N=2^{12}$ grid points, $n=2000$ time steps and a minimal slope
of $\epsilon = 5$. Figure \ref{fig:simAm2} shows the typical sample paths for the American option where the reflecting process $A_t$ (hedging cost) is now non-zero for in-the-money path indicating a difference in price with the European call option.

Figure \ref{fig:solsimAm} displays the option price and delta surfaces. The regularity of these surfaces indicates that the convolution method is efficient in handling non-smoothness in the terminal condition $g$ but also in the driver $f$.

\section{Conclusion} \label{sec:concl} In this paper we presented  a new spatial discretization method for the numerical solution of backward stochastic differential equations (BSDEs).  This new method expresses conditional expectations appearing in explicit Euler time discretizations of the BSDE as convolutions.   The convolution theorem of Fourier analysis is then applied in order to derive a recursive, backward in time, method for computing the numerical solution of the BSDE in terms of inverse Fourier transforms of previous time step solutions.   After discretizing the state and Fourier space these expressions can be implemented using the fast Fourier transform (FFT) algorithm. Since the FFT algorithm is more suitable for periodic functions we introduced a transform in order to treat BSDEs with non-periodic terminal conditions.  A (local) error analysis is provided which indicates that the use of the FFT performs an extrapolation that induces a non-negligible error term meaning that transform does not completely solve the problem of non-periodicity.  We extend the convolution method to consider forward-backward stochastic differential equation (FBSDEs) and reflected FBSDEs which are an important extensions for financial applications.

Numerical experiments, in the context of option pricing problems, show that the convolution method is accurate and handles non-linearity and non-smoothness in the BSDE coefficients.  The addition of a technique to suppress the extrapolation error is an interesting improvement to the method and shall be presented in a future paper (see \cite{poly:PhD}).  \revised{Application of the convolution method to other examples; such as pricing call-spread options under different borrowing and lending rates as in \cite{MR2265667}, \cite{MR3287774}, \cite{RuijOosl:2013}; can provide further opportunities to study the effectiveness of the method.}   Efficient implementations for multidimensional problems is an important area of future research.

\bibliographystyle{siam}
\bibliography{biblio-rev}

\end{document}